\newcommand{\F}{{\mathbb{F}}}
\newcommand{\Z}{{\mathbb{Z}}}
\newcommand{\Q}{{\mathbb{Q}}} 
\newcommand{\N}{{\mathbb{N}}}    
\newcommand{\C}{{\mathbb{C}}}    
\newcommand{\q}{{\mathfrak{q}}}
\newcommand{\p}{{\mathfrak{p}}}
\newcommand{\OO}{{\mathcal{O}}}
\newcommand{\lra}{\longrightarrow}
\newcommand{\Hilbert}[3]{\left( #1, #2 \right)_{#3}}
 \newcommand{\genlegendre}[4]{%
  \genfrac{(}{)}{}{#1}{#3}{#4}%
  \if\relax\detokenize{#2}\relax\else_{\!#2}\fi
}
\newcommand{\legendre}[3][]{\genlegendre{}{#1}{#2}{#3}}
\newcommand*{\rom}[1]{\expandafter\@slowromancap\romannumeral #1@}
\DeclareFontFamily{U}{wncy}{}
    \DeclareFontShape{U}{wncy}{m}{n}{<->wncyr10}{}
    \DeclareSymbolFont{mcy}{U}{wncy}{m}{n}
    \DeclareMathSymbol{\Sh}{\mathord}{mcy}{"58}
\theoremstyle{plain}
\newtheorem{theorem}{Theorem}[section]
\newtheorem*{theorem*}{Theorem}
\numberwithin{equation}{section}
\newtheorem*{theorem'''}{Proposition {\bf C}}
\newtheorem*{theorem'}{Theorem {\bf A}}
\newtheorem*{theorem''}{Corollary {\bf B}}
\newtheorem*{theorem''''}{Theorem {\bf D}}
\newtheorem{proposition}[theorem]{Proposition}
\newtheorem{rem}[theorem]{Remark}
\newtheorem{example}[theorem]{Example}
\newtheorem{lemma}[theorem]{Lemma}
\newtheorem{defn}[theorem]{Definition}
\begin{document}

\title[cube Sum and class Number of cubic fields]{On certain root number  $1$  cases of the cube sum problem}

\author[Shamik Das]{Shamik Das}
\email{shamikd@iitk.ac.in, jhasom@iitk.ac.in}

\author[Somnath Jha]{Somnath Jha}
\address{Department of Mathematics and Statistics\\ IIT Kanpur \\ India}

\thanks{2020 MSC classification: Primary 11G05, 11R29, 11R34, 11D25; Secondary 11A15}

\date{}

\begin{abstract}
We consider certain  families of integers $n$ determined by some congruence condition, such that the global root number of the elliptic curve $E_{-432n^2}: Y^2=X^3-432n^2$ is $1$ for every $n$, however a given  $n$ may or may not be a sum of two rational cubes.  
We give explicit criteria in terms of the $2$-parts  and $3$-parts of the ideal class groups of certain cubic number fields to determine whether such an $n$ is a cube sum.  In particular, we study integers $n$ divisible by $3$ such that the global root number of $E_{-432n^2}$ is $1$. For example, for a prime $\ell \equiv 7 \pmod{9}$, we show that for $3\ell$ to be a sum of two rational cubes, it is necessary that the ideal class group of $\Q(\sqrt[3]{12\ell})$ contains $\frac{\Z}{6\Z}\oplus \frac{\Z}{3\Z}$ as a subgroup. Moreover, for a positive proportion of primes $\ell \equiv 7 \pmod{9}$, $3\ell$ can not be a sum of two rational cubes. A key ingredient in the proof is to explore the relation between the $2$-Selmer group and the $3$-isogeny Selmer group of $E_{-432n^2}$ with the ideal class groups of appropriate cubic number fields. 
\end{abstract}

\subjclass[2020]{}

\keywords{cube sum problem, Mordell  curve, class number, isogeny Selmer group, primes represented by binary quadratic forms.} 
\maketitle

\section*{Introduction}
\noindent  
 An integer $n$ is said to be a rational cube sum or simply a cube sum if $n=x^3+y^3$ for some $ x,y \in \Q$. If an integer $n$ can not be written as a sum of two rational cubes, then we say that $n$ is a non-cube sum. A classical Diophantine problem asks the question: which integers are cube sums? It is  well-known that a cube-free integer $ n > 2 $ is a cube sum if and only if the elliptic curve $$ E_{-432n^2}:  y^2 = x^3 -432n^2 $$ has  positive Mordell-Weil rank over $ \mathbb{Q} $ i.e.  $\operatorname{rank}_\Z \ E_{-432n^2}(\Q) >0 .$ A recent important result of Alp\"oge-Bhargava-Shnidman-Burungale-Skinner  \cite{abs} shows that a positive proportion of integers are cube sums and a positive proportion of integers are not. 
 Let $ w(n) =w (E_{-432n^2}/\Q)  \in \{ \pm 1 \} $ denote the global root number of the elliptic curve $ E_{-432n^2} $ over $ \mathbb{Q} $ i.e. $w(n)= (-1)^{\operatorname{ord}_{s=1} \ L(E_{-432n^2}/\Q, s)}$, the sign of the functional equation of the Hasse-Weil complex $L$-function 
$L(E_{-432n^2}/\Q,s)$ of $E_{-432n^2}$ over $\Q$ (see \cite{roh}). 
For a cube-free integer $n >2$, a computation by Birch-Stephens \cite{bs} gives an explicit formula for $ w(n) $, as follows:  
\begin{small}
\begin{equation}
 \label{rootnumber}  
 w(n) = -\prod\limits_{p \text{ prime}} w_p(n), \quad \text{ where}
\end{equation}
$$
w_3(n) =
\begin{cases} 
-1, & \text{if } n \equiv \pm 1, \pm 3 \pmod{9}, \\ 
1, & \text{otherwise,} 
\end{cases}  
\quad \text{ and  for $p \neq 3$,} \quad 
w_p(n) =
\begin{cases} 
-1, & \text{if } p \mid n \text{ and } p \equiv  2 \pmod{3}, \\ 
1, & \text{otherwise.} 
\end{cases}
$$
\end{small}
 Let us denote the algebraic and analytic rank of $E_{-432n^2}/\Q$ by  $r_{\operatorname{al}}(n)$ and $r_{\operatorname{an}}(n)$, respectively i.e. $r_{\operatorname{al}}(n):=\text{rank}_\Z \ E_{-432n^2}(\Q)$  and $r_{\operatorname{an}}(n):= \text{ord}_{s=1} \ L(E_{-432n^2}/\Q,s)$.  Then (a part of) the Birch and Swinnerton-Dyer (BSD) conjecture predicts that $r_{\operatorname{al}}(n) =   r_{\operatorname{an}}(n)$ and the parity conjecture asserts that $r_{\operatorname{al}}(n) \equiv  r_{\operatorname{an}}(n) \pmod 2.$   
Thus, if the root number $w (n)=-1 $, the parity conjecture predicts that $r_{\operatorname{al}}(n) >0$. However, if  $w (n)=1 $, then the situation is ambiguous and $r_{\operatorname{al}}(n)$ may either be $0$ or a positive even integer.

Albeit the important result in \cite{abs}, there is no unconditional general method or a provably terminating algorithm to determine if a given general integer $n$ is a cube sum \footnote{Although, in practice, given an elliptic curve $E/\Q$, assuming the finiteness of the Tate-Shafarevich group $\Sh(E/\Q)$ (Definition \ref{eq:defofsha}), one can compute the rank of  $E(\Q)$  using a standard computer software if the conductor of $E$ is reasonably small.}. There are classical works of Sylvester \cite{syl} and Selmer \cite{sel} on this  topic and most of the available literature covers the case where the (cube-free) integer $n$ is of particularly `simple' form, given by $p^iq^j$ with $p,q$  distinct primes and $i,j \leq 2$ (cf. \cite{dv}, \cite{jms2}). Further, in this case, if the root number $ w(n)=-1$, then the Heegner point type of argument has been used in the literature, following \cite{sat} (cf. \cite{dv}, \cite{yi}). In this article, we focus on  `ambiguous' cases of the cube sum problem for certain  families of cube-free integers $n$ such that the global root number $w(n)$ of the elliptic curve $E_{-432n^2}/\Q$  is  $1$ for every $n$ in a family, but a given $n$ in the family may or may not be a cube sum. Further, an $n$ as above divisible by $3$, is of particular interest to us.

At first, let us consider the prime numbers. Then the cube sum  property of primes is studied in congruence classes modulo $9$ and is governed by the so called Sylvester's conjecture \cite{syl} (cf. \cite{dv}).  For a prime number $\ell$, it follows from \eqref{rootnumber} that  the root number $w(\ell)=1$ if  $\ell  \equiv 1, 2 \text{ or } \ 5 \pmod 9$. Further, if $\ell  \equiv  2 ,  5 \pmod 9$, it was shown by P\'epin, Lucas and Sylvester \cite{syl} that $\ell$ can not be written as a sum of two rational cubes. However, the situation for a prime $\ell \equiv 1 \pmod 9$ is ambiguous and $r_{\operatorname{al}}(\ell)$ may be $0$ or a positive even integer (and there are examples of both).  Villegas-Zagier \cite{rz} studied the case of a prime $\ell \equiv 1 \pmod 9$  and presented three different efficient methods to determine whether $L(E_{-432\ell^2}/\Q,s)$ vanishes at $s=1$ or not. Note that if $L(E_{-432\ell^2}/\Q,s)$ vanishes at $s=1$, to conclude $r_{\operatorname{al}}(\ell)>0$, one needs to invoke the BSD conjecture, which is wide open for $r_{\operatorname{an}}(\ell) \geq 2$. Note that using binary cubic forms, it was shown in \cite{jms2} that there are infinitely many primes $\ell \equiv 1 \pmod 9$ such that $\ell$ is a cube sum i.e. $r_{\operatorname{al}}(\ell) >0$, although the set of such primes is not explicit there.

More generally, when we have an infinite family $\mathcal F$ of (cube-free) integers, such that $ E_{-432n^2}$ has global root number equal to $1$ for every $n \in \mathcal F$ and $ \mathcal F$ contains both cube sum and non-cube sum integers, it would be useful to have explicit criteria for verifying whether a given $n \in \mathcal F$ is a cube sum or not. Note that the elliptic curve $E_{-432n^2}$ has additive reduction at the prime $3$  for any $n$ and further, if (i) $3\mid n$, (ii) the root number $w(n)$ is $ 1$ for every $n \in \mathcal F$ and (iii) $\mathcal F$ contains integers $n$ such that rank of $E_{-432n^2}(\Q)$ is positive (respectively zero), then cube sum problem for such a family $\mathcal F$ does not seem to be discussed in the literature. 

 In the main results of this article (Theorem \hyperlink{thm:A}{A} and Corollary \hyperlink{thm:B}{B}), we discuss a necessary condition for an integer of the form $3\ell$ or $3\ell^2$, where $\ell $ is a prime varying in certain congruence class modulo $  9$ to be a cube sum, in terms of the $2$-part  and $3$-part of the ideal class group of a certain cubic number field.  As a by-product, the criterion gives us an estimate of the density of non-cube sum integers in the family. We fix some notation before stating the result.

 {\bf Notation:} We say that a positive integer $n$ is cube-free if $p^3 \nmid n $ for any prime $p$. Throughout the article,  $\operatorname{cf}(n)$ will denote the cube-free part of a positive integer $n$ i.e. $\operatorname{cf}(n)=\frac{n}{m^3}$,  where $m$ is the largest positive integer such that $m^3\mid n$. For a cube-free integer $n  > 1$, let $\operatorname{Cl}_{\mathbb{Q}(\sqrt[3]{n})}$ be the ideal class group of the cubic number field $\mathbb{Q}(\sqrt[3]{n})$. Let $A$ be an abelian group and $p$ be a prime. For any $n \in \N$, recall $A[p^n]:=\{x\in A: p^nx=0\}$ and the $p$-rank of $A$ is defined to be $\text{dim}_{\mathbb F_p}\ A\otimes_\Z {\mathbb F_p}=\text{dim}_{\mathbb F_p}\ A[p].$ 
\begin{defn}
\label{p-rank-cl}
    For a prime number $p$, we denote by $h_p(n)$, the $p$-rank of $\operatorname{Cl}_{\mathbb{Q}(\sqrt[3]{n})}$.   
\end{defn}

\begin{theorem'}
\hypertarget{thm:A}{}
\label{thm3l=7,4}
Let $\ell$ be a prime.
\item[(i)] If $\ell \equiv 7 \pmod{9}$ and $3\ell$ is a cube sum, then $h_3(12\ell)=2$. Moreover, for a positive proportion of primes  $\ell  \equiv 7 \pmod{9}$,  $3\ell$ is not a cube sum.
\item[(ii)] If $\ell \equiv 4 \pmod{9}$ and $3\ell^{2}$ is a cube sum, then $h_3(18\ell) =2$. Moreover, for a positive proportion of primes  $\ell  \equiv 4 \pmod{9}$,  $3\ell^2$ is not a cube sum. 
\end{theorem'}
Strengthening Theorem \hyperlink{thm:A}{A}, we have the following corollary:
\begin{theorem''}
\hypertarget{thm:B}{}
\label{mod18}
Let $\ell$ be a prime.
\item[(i)] If $\ell \equiv 7 \pmod{9}$ and $3\ell$ is a cube sum, then $\operatorname{Cl}_{\Q(\sqrt[3]{12\ell})}$ contains a subgroup isomorphic to $\frac{\Z}{6\Z}\oplus \frac{\Z}{3\Z}$. 
\item[(ii)] If $\ell \equiv 4 \pmod{9}$ and $3\ell^2$ is a cube sum, then \begin{small}$\operatorname{Cl}_{\Q(\sqrt[3]{18\ell})}$\end{small} contains a subgroup isomorphic to $\frac{\Z}{6\Z}\oplus \frac{\Z}{3\Z}.$ 
\end{theorem''}

 In fact, Corollary \hyperlink{thm:B}{B} follows from Theorem  \hyperlink{thm:A}{A}  and Proposition \hyperlink{thm:C}{C}, which we state below. We prove a more general result in Proposition \hyperlink{thm:C}{C} and in particular, the proposition  yields a necessary condition for a prime $\ell \equiv 1 \pmod 9$ to be a cube sum in terms of $h_2(4\ell)$.

\begin{theorem'''}
\hypertarget{thm:C}{}
\label{cubesum2part}
Let $n >2 $ be a cube-free  integer which is a rational cube sum. Assume the following:  
(i) {$\operatorname{cf}(4n) \not\equiv 
\pm 1 \pmod{9}$,} and  
(ii) the global root number $w(n)$ of $E_{-423n^2}$ over $\Q$ is equal to $1$. 
Then $h_2(\operatorname{cf}(4n)) >0$ i.e. the class number of  $\mathbb{Q}(\sqrt[3]{4n})$ is even. 

In particular, let $\ell \equiv 1 \pmod 9$ be a prime. If $h_2(4\ell)=0 $ (respectively $h_2(2\ell)=0$), then $\ell$ (respectively $\ell^2$) is a non-cube sum.\qed
\end{theorem'''}

We also establish a result similar to Theorem \hyperlink{thm:A}{A} for integers of the form $2\ell$ and $2\ell^2$, where $\ell$ is a prime in certain congruence class modulo $9$:

\begin{theorem''''}
\hypertarget{thm:D}{}
\label{thm2l=1}
Let $\ell$ be a prime satisfying $\ell \equiv 1 \pmod{9}$. If either $2\ell$ or $2\ell^2$ is a cube sum, then $h_3(2\ell) = 2$. Furthermore, for a positive proportion of primes $\ell \equiv 1 \pmod{9}$, neither $2\ell$ nor $2\ell^2$ can be expressed as a sum of two rational cubes.
\end{theorem''''}

Let $E$ be an elliptic curve over a number field $K$.  The Mordell-Weil group of $E(K)$ is difficult to compute and given a $K$-rational  isogeny $\varphi: E\lra \hat{E}$, via the $\varphi$-descent exact sequence (see \eqref{eq:defofsha}), often one instead studies the  $\varphi$-Selmer group $S_\varphi(E/K)$ (Definition \ref{mainsel}).   
Starting with the work of Cassels \cite{cass}, the relation between an isogeny induced Selmer group of $E/K$ and the ideal class group of a suitable extension of $K$ has been studied extensively by various authors (see \cite{bk}, \cite{ss} and also \cite{jms}). In our case for $E_{-432n^2}/\Q$, we have a rational degree $3$-isogeny  $\varphi_n: E_{-432n^2} \lra E_{16n^2}$ (see \cite{bes}, \cite[\S2]{jms}, also \eqref{eq:defofphi}). 
The broad idea behind the proofs of our main results 
is to explore the relation between the $2$-Selmer group (respectively, the $\varphi_n$-Selmer group) of $E_{-432n^2}$ with the $2$-part  (respectively, $3$-part)  of the  ideal class group of appropriate cubic number fields. However, we would like to mention the following:
    \begin{rem}{\rm To compare the $2$-Selmer group of $E_{-432n^2}/\Q$ with the ideal class group of a suitable number field $F$, it is a natural choice to consider  \begin{small}$F:=\frac{\Q[X]}{(X^3-432n^2)}\cong \Q(\sqrt[3]{4n})$\end{small}, as done in Proposition \hyperlink{thm:C}{C}. On the other hand, $E_{-432n^2}[3]$ is a reducible $G_\Q$-module and  a degree-$3$ isogeny corresponds to a $G_\Q$ stable subgroup of order $3$ in $E(\bar{\Q})$. It is easy to verify that the $3$-torsion points of $E_{-432n^2}$ are defined over $\Q(\sqrt{-3}, \sqrt[3]{n})$. However, the cubic fields stated in Theorem \hyperlink{thm:A}{A}  and in Theorem \hyperlink{thm:D}{D}  for the case $2\ell^2$ are not contained in $\Q(\sqrt{-3}, \sqrt[3]{n})$. 
    
    Let us discuss a couple of examples;  $3\cdot 61$ is a cube sum with $61\equiv 7 \pmod 9$. However, the class numbers of both $\mathbb{Q}(\sqrt[3]{183})$ and $\mathbb{Q}(\sqrt{-3}, \sqrt[3]{183})$ are equal to $3$. On the other hand,  $3\cdot43$ is a non-cube sum with $3$-ranks of the class groups of both $\mathbb{Q}(\sqrt[3]{129})$ and $\mathbb{Q}(\sqrt{-3}, \sqrt[3]{129})$ are equal to $1$; so the cube sum property is not captured via the $3$-part of the class groups of subfields inside $\Q(\sqrt{-3}, \sqrt[3]{n})$. 
    
    Thus, it requires some work to the make the correct choice of the fields, which yield Theorem  \hyperlink{thm:A}{A}   and Theorem \hyperlink{thm:D}{D}.}
\end{rem}

\begin{rem}\label{converse}
{\rm 
\begin{itemize} 
\item We emphasize that in each of the families considered in the above results, the root number $w(n)$ of the corresponding elliptic curve is always equal to $1$, but there are examples of both cube sum and non-cube sum integers (see table \ref{tab:class_numbers}).
\item In Example \ref{ex1}, we illustrate that the necessary condition obtained in our results above are not sufficient. 
\item We demonstrate, via Example \ref{ex2}, that both the assumptions (i) and (ii) are necessary in  Proposition \hyperlink{thm:C}{C}.
\item Note that we have relaxed the condition (i) of Proposition \hyperlink{thm:C}{C} in the setting of Theorem \hyperlink{thm:D}{D}.
\end{itemize}}
\end{rem}
We now discuss the idea behind the proofs of the results stated above; starting with Theorem \hyperlink{thm:A}{A}.  The proof of the first assertion of Theorem \hyperlink{thm:A}{A} is divided in two steps. At first, we show that the structure of the $3$-part of the ideal class group of the cubic fields stated in our results, is related to the cubic residue symbol of 
$3$ modulo the corresponding prime (Lemma \ref{lem3l=7,4,3rank}). This step uses results of Gerth \cite{ger} along with some explicit computation on relevant cubic Hilbert symbols. 
 
 As stated earlier, the Mordell curve $E_{-432n^2}$ has a  $3$-isogeny $\varphi_n: E_{-432n^2} \lra E_{16n^2}$ (see \eqref{eq:defofphi}). The idea in the second step is to explicitly compute this $3$-isogeny Selmer group $S_{\varphi_n}(E_{-432n^2}/\Q(\sqrt{-3}))$ of $E_{-432n^2}$ over $\Q(\sqrt{-3})$,  with $n$ in the setting of Theorem \hyperlink{thm:A}{A} (a suitable description of the Selmer group in this setting is given in \eqref{eq:newseldefsq}). In fact, in Proposition 
 \ref{prop3l=7,4,cubic},  we relate the $\F_3$-dimension of this Selmer group with  the same cubic residue symbol appearing in the first step of the proof; thereby completing  the argument (for the first assertion of Theorem \hyperlink{thm:A}{A}).

The elliptic curve $E_{-432n^2}/\Q$ in general has bad, additive reduction at $3$. Further, in the setting of  Proposition \ref{prop3l=7,4,cubic}, $3\mid n$ and the image of the Kummer map of $E_{-432n^2}$ at the prime above $3$ is difficult to determine precisely (see Remark \ref{lastrem},  \cite[Prop. 4.10(2)]{jms}, \cite{dmm}).  So, we only get an upper bound on the $\F_3$-dimension of $S_{\varphi_n}(E_{-432n^2}/\Q(\sqrt{-3}))$ in Proposition \ref{prop3l=7,4,cubic} and then appeal to the $3$-parity conjecture (known due to  Nekov\'a\v{r}, Kim, Dokchitser-Dokchitser, cf. \cite{nek})  to compute the dimension precisely. In fact, as the root number $w(n)=1$,   the $3$-parity conjecture (Theorem \ref{s2w}) gives us that $\operatorname{dim}_{\Q_3}\ \operatorname{Hom}_{\Z_3}\big(S_{3^\infty}(E_{-432n^2}/\Q),\Q_3/{\Z_3}\big)\otimes_{\Z_3} {\Q_3}$ is even. Here for a prime $p$, $S_{p^\infty}(E_{-432n^2}/\Q)$ denotes the  $p^\infty$-Selmer group of $E_{-432n^2}/\Q$, defined in  \eqref{p-primary-sel}. Then we compare the parity of the corresponding ranks of the $3^\infty$ and $3$-Selmer groups of $E_{-432n^2}/\Q$ in Lemma \ref{2and2infinity}, using the Cassels-Tate pairing  on the Tate-Shafarevich group $\frac{\Sh(E_{-432n^2}/\Q)}{\Sh(E_{-432n^2}/\Q)_\text{div}}$  (see \ref{n-selmer-d}).  Note that using the arithmetic of the  elliptic curve $E_{-432n^2}$ and  the above Cassels-Tate pairing on $\Sh$, we can relate the $\F_3$-dimensions of  $\ S_{3}(E_{-432n^2}/\Q)$ and   $S_{\varphi_n}(E_{-432n^2}/\Q(\sqrt{-3})).$

The methods to prove the first part of Theorem \hyperlink{thm:D}{D} is similar in spirit to that corresponding part of  Theorem \hyperlink{thm:A}{A}. At first, in this setting, $h_3(n)$ is related to the cubic residue symbol of $2$ in Lemma \ref{lem2l=1,3rank}. However, as $3\nmid n$, the image of the Kummer map at $3$ for $E_{-432n^2}$ can be determined and under some suitable assumption (which appears in Lemma \ref{lem2l=1,3rank}),  $\text{dim}_{\F_3} \  S_{\varphi_n}(E_{-432n^2}/\Q(\sqrt{-3}))$ has been computed precisely in \cite[Theorem 1.2]{jms} and we use this result to deduce Theorem \hyperlink{D}{D}. 

 For the second assertions relating to the positive density of primes in Theorems \hyperlink{thm:A}{A} and \hyperlink{thm:D}{D}, we recall that there are classical results which relate the cubic residue symbol of $2$ (respectively $3$) modulo a prime $\ell \equiv 1 \pmod 3$ with the representation of the prime $\ell$ by certain integral binary quadratic form (cf. \cite{cox}). It is known that the subset of primes congruent to $1 \pmod 3$, represented by these integral binary quadratic form, has a positive (Dirichlet) density. However, we need a refinement of this statement to complete our proof. Specifically,  we need to show these integral binary quadratic forms represent a subset of primes of positive (Dirichlet) density, in an arithmetic progression determined by a given congruence class modulo $9$. We extract this result from the work of \cite{hal}, which is an  extension of the results of \cite{mey}.

We now outline the proof of Proposition \hyperlink{thm:C}{C}. Let $E/\Q$ be an elliptic curve and let $\Q(E[2])$ be the field obtained by adjoining the $2$-torsion points of $E$ over $\Q$. Assume that $E[2](\Q)=0$. Then a result of \cite[Proposition 7.1]{bk} (see \eqref{2rankbound}) relates the $\F_2$-dimension of the $2$-Selmer group $S_2(E/\Q)$ with the $2$-part of the ideal class group of a  cubic subfield, say $F$, of $\Q(E[2])$. Assuming $n$ to be a cube sum in Proposition \hyperlink{thm:C}{C}, we get that $\text{rank}_\Z \ E_{-432n^2}(\Q) >0$. Then applying the $2$-parity conjecture (known due to Kramer, Monsky,  cf. \cite{mon}), we determine the parity of $\operatorname{dim}_{\Q_2}\ \operatorname{Hom}_{\Z_2}\big(S_{2^\infty}(E_{-432n^2}/\Q),\Q_2/{\Z_2}\big)\otimes_{\Z_2} {\Q_2}$ and further, using Lemma \ref{2and2infinity}, we compare it with the parity of $\text{dim}_{\F_2} \ S_{2}(E_{-432n^2}/\Q)$. Analyzing the reduction types of the CM elliptic curve $E_{-432n^2}$, identifying $\mathrm{dim}_{\F_2}\ \mathrm{Cl}_F[2]$ with $h_2(\mathrm{cf}(4n))$ and applying \cite{bk}'s result, we deduce the proposition.

 \noindent \textbf{Structure of this article:} After the introduction,  \S\ref{prelim} contains preliminaries on  (i) the relation between $3$-rank of the ideal class group of a cubic field and cubic Hilbert symbols,  and (ii)  Selmer groups and the $p$-parity conjectures.  In  \S \ref{proof1.2}, we prove all our results, stated in the introduction. A table  of numerical examples (Table \ref{tab:class_numbers}) appears in \S \ref{nexampl}.
 
 \noindent \textbf{Acknowledgment:} It is a pleasure to thank Dipramit Majumdar for discussions and answering many questions.  We are also grateful to Debanjana Kundu and Pratiksha Shingavekar for comments and suggestions. We thank the anonymous referee for various comments, suggestions and corrections which has helped us to improve the article. S. Das is supported by DST INSPIRE Faculty fellowship. S. Jha 
acknowledges the support of ANRF grant CRG/2022/005923. 
\section{preliminaries}\label{prelim}
In this section, we recall some definitions, discuss the basic set up and also state some known results which are used later.
\subsection{\texorpdfstring{$3$}{lg}-part of the class number of \texorpdfstring{$\Q(\sqrt[3]{n})$}{lg}}\label{cubic-fields} 
For any number field $M$, let $\mathcal{O}_{M}$ be its ring of integers. Let $\zeta=\zeta_3$ be a primitive cube root of unity in $\mathbb{C}$ and put $K:=\mathbb{Q}(\zeta)=\Q(\sqrt{-3})$. Set $\mathfrak{p}: =1-\zeta$ and by a slight abuse of notation, we denote both the element $\p$ and the ideal $(\p)$ by $\p$ and it is understood from the context.   Observe that $3\mathcal{O}_{K} = \mathfrak{p}^{2}$. Let $n >1 $ be a  cube-free  integer. Put  $F:=\mathbb{Q}(\sqrt[3]{n})$ and $L := K(\sqrt[3]{n}) = \mathbb{Q}( \zeta, \sqrt[3]{n})$. 
At first we note down well-known results on the ramification of rational primes in certain  cubic number fields which can be conveniently found in \cite[Tables 1-3]{sch}.  
\begin{lemma} 
\label{ramification} 
 Let $n > 1$ be a cube-free integer and $F$  and $L$ be as above.  Then we have:
\begin{itemize} 
\item[(i)] Let $q \neq 3$ be a prime in $\mathbb{Z}$. If $q \mid n$, then $q\mathcal{O}_F = \mathfrak{Q}^3$, where $\mathfrak{Q}$ is a prime of $\OO_F$ above $q$. 
\item[(ii)]  If $n^2 \not\equiv 1 \pmod{9}$, then $3$ is totally ramified in both $F$ and $L$. In particular, this holds if $3\mid n$.
\item[(iii)] If $n^2 \equiv 1 \pmod{9} $, then $3\mathcal{O}_F = \mathfrak{PQ}^2$, where $\mathfrak{P}$ and $\mathfrak{Q}$ are distinct primes of $\mathcal{O}_F$. Also, in this case $\mathfrak{p}\mathcal{O}_{L}= \mathfrak{P}_{1}\mathfrak{P}_{2}\mathfrak{P}_{3}$, where $\mathfrak{P}_{1}, \mathfrak{P}_{2}$ and $\mathfrak{P}_{3}$ are distinct primes in $\mathcal{O}_{L}.$ 
\end{itemize} 
\end{lemma}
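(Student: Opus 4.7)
The plan is to work locally at each prime in question, reducing to the factorization of $X^3 - n$ over the relevant completion via Newton polygon methods and Kummer theory. One key fact used repeatedly is that the cubes in $\Z_3^\ast$ are exactly the elements $u$ with $u \equiv \pm 1 \pmod{9}$; this follows from the decomposition $\Z_3^\ast \cong \mu_2 \times (1+3\Z_3)$, because cubing is an isomorphism on $\mu_2$ and corresponds to multiplication by $3$ on $1+3\Z_3 \cong \Z_3$.

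For part (i), fix a prime $q \neq 3$ with $q \mid n$. Since $n$ is cube-free, $v_q(n) \in \{1,2\}$, so the Newton polygon of $X^3 - n$ over $\Q_q$ is a single segment of slope $-v_q(n)/3$ with $\gcd(v_q(n),3) = 1$; hence $X^3 - n$ is irreducible over $\Q_q$ and cuts out a totally ramified cubic extension. Therefore $F \otimes_\Q \Q_q$ is a field with a unique prime above $q$ of ramification index $3$, giving $q\OO_F = \mathfrak Q^3$.

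For part (ii), first suppose $3 \mid n$; the same Newton polygon argument applied at $3$ (again using $v_3(n) \in \{1,2\}$) shows that $F/\Q$ is totally ramified of degree $3$ at $3$. If instead $3 \nmid n$ and $n^2 \not\equiv 1 \pmod{9}$, then by the key fact $n$ is not a cube in $\Z_3^\ast$, so $X^3 - n$ is irreducible over $\Q_3$. Choosing $\epsilon \in \{\pm 1\}$ with $\epsilon \equiv n \pmod 3$ and setting $Y = \sqrt[3]{n} - \epsilon$, one checks that $Y$ satisfies $Y^3 + 3\epsilon Y^2 + 3Y + (\epsilon - n) = 0$ (using $\epsilon^2 = 1, \epsilon^3 = \epsilon$); the hypothesis $n \not\equiv \pm 1 \pmod{9}$ gives $v_3(\epsilon - n) = 1$, producing a Newton polygon of single slope $-1/3$ and hence $v_3(Y) = 1/3$. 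So $F/\Q$ is totally ramified at $3$ in this subcase as well. To upgrade to $L$, I would note that $K/\Q$ is totally ramified quadratic at $3$ while $F/\Q$ is totally ramified cubic at $3$; since the ramification indices $2$ and $3$ are coprime and $[L:\Q] = 6$, the compositum $L = FK$ must have ramification index $6$ at $3$, proving $3$ is totally ramified in $L$.

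For part (iii), $n^2 \equiv 1 \pmod{9}$ implies by the key fact that $n = m^3$ for some $m \in \Z_3^\ast$, so over $\Q_3$ we have the factorization $X^3 - n = (X-m)(X^2 + mX + m^2)$. The quadratic factor has discriminant $-3m^2$, which has odd $3$-adic valuation and hence is not a square in $\Q_3$; thus it is irreducible and cuts out the unique ramified quadratic extension $\Q_3(\zeta) = K_\p$. Therefore $F \otimes_\Q \Q_3 \cong \Q_3 \times K_\p$, giving an unramified prime $\mathfrak P$ and a ramified prime $\mathfrak Q$ (with $e(\mathfrak Q) = 2$) above $3$, so $3\OO_F = \mathfrak P \mathfrak Q^2$. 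Finally, $n = m^3$ is already a cube in $\Q_3 \subset K_\p$, so $X^3 - n$ splits completely over $K_\p$, which means $\p$ splits completely in the Kummer extension $L/K$, yielding $\p\OO_L = \mathfrak P_1 \mathfrak P_2 \mathfrak P_3$. No serious obstacle arises in this proof: each assertion reduces to a concrete local factorization, the only mild subtlety being the Newton polygon argument after the substitution $Y = \sqrt[3]{n} - \epsilon$ in the second subcase of (ii).
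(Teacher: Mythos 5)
Your proof is correct and complete. The paper itself offers no argument for this lemma --- it is stated as a well-known fact ``which can be found in standard textbooks'' --- so there is nothing to compare against; your local analysis (Newton polygons at $q\mid n$ and at $3$ after the shift $Y=\sqrt[3]{n}-\epsilon$, the characterization of cubes in $\Z_3^{*}$ as units $\equiv\pm1\pmod 9$, and the identification of $F\otimes_{\Q}\Q_3\cong\Q_3\times\Q_3(\zeta_3)$ in case (iii)) is a standard and valid way to supply the missing proof, and each step checks out.
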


 Now, following  \cite{ger}, we give an explicit formula for $h_3(n)$, the $3$-rank of $\operatorname{Cl}_{\Q(\sqrt[3]{n})}$ using cubic Hilbert symbols. First, we introduce some notation and discuss the set up.
Consider a positive integer $n$ in the following form:
\begin{equation}
\label{n}    
n=2^{f}3^{\mu}p^{e_{1}}_{1}\cdots p^{e_{v}}_{v}p^{e_{v+1}}_{v+1}\cdots p^{e_{w}}_{w},\end{equation}
where the $p_{i}$ and $q_{i}$ are (positive) integer  primes,
\begin{small}
\begin{align*}
 p_{i} \equiv 1  \pmod{9} \quad \text{ for } 1 \leq i \leq v, \quad \text{ and } \quad
 p_{i} \equiv 4,\;7   \pmod{9} \quad \text{ for } v+1 \leq i \leq w,
\end{align*}
\end{small}
with $e_{i}, f \in  \{1,2\}$ and $\mu \in \{0,1,2\}$. Recall that for $1\leq i\leq w$, each $p_{i}$ splits as  $p_{i}=\pi_{i} \pi_{i}'$ in $\mathcal{O}_K=\Z[\zeta]$, where $\pi_{i}$ and $\pi_{i}'$ are prime elements, each congruent to $1 \pmod {3\mathcal{O}_K}$, and are complex conjugates of each other. Observe that in Theorems \hyperlink{thm:A}{A} and \hyperlink{thm:D}{D}, we consider cubic fields of the form $\mathbb{Q}(\sqrt[3]{n})$, where $n$ can be expressed in the form given by \eqref{n} and $n^2 \not \equiv 1\pmod{9}$.

\noindent  Following \cite{ger}, for an integer $n$ of the form given in \eqref{n}, we define $2w$-tuples $x(n)=(x_{1}, x_{2}, \ldots, x_{2w})$, $x_i\in K$ as follows: 
\begin{align}
\label{x}
(x_{1}, x_{2}, \ldots, x_{2w})= 
\begin{cases}
   &( \pi_{1}\pi_{1}^{\prime 2}, \ldots, \pi_{w}\pi_{w}^{\prime 2}, p_{1}, \ldots, p_{v}, \; p_{v+1}p^{h_{v+2}}_{v+2}, \ldots, p_{v+1}p^{h_{w}}_{w}, p_{v+1} 2^{\alpha}) \; \text{ if } w>v,\\
 &( \pi_{1}\pi_{1}^{\prime 2}, \ldots, \pi_{w}\pi_{w}^{\prime 2}, p_{1}, \ldots, p_{w}) \; \text{ if } w=v.  
\end{cases}
\end{align}
Here, for $w>v$ and for each $i$ with $v+2 \leq i \leq w $,  $h_{i}$ is defined as follows: $h_{i} \in \{1,2\}$ and $h_{i}$ is chosen so that $p_{v+1}p^{h_{i}}_{i}\equiv 1 \pmod{9}$ holds.  Also, $\alpha \in \{1, 2\}$ is chosen so that $2^{\alpha}p_{v+1}\equiv \pm 1 \pmod{9}$.

 Let $n$ be an integer of the form given in \eqref{n}. Let $x = (x_1, \ldots, x_{2w})$ be as given in \eqref{x}, and set $u := 2w + 2$. We define a $w \times u$ matrix $B = (\beta_{ij})$ over the field $\mathbb{F}_3$ as follows:
\begin{equation}
\label{B}
 \zeta^{\beta_{ij}}=\begin{cases}
 & \Hilbert{x_{w+i}}{n}{\pi_{m}} \quad 1 \leq i \leq w, \quad  1\leq m \leq w, \;\; j=2m-1,\\
 & \Hilbert{x_{w+i}}{n}{\pi'_{m}} \quad 1 \leq i \leq w, \quad  1\leq m \leq w, \;\; j=2m,\\
 & \Hilbert{x_{w+i}}{n}{2} \quad \quad 1 \leq i \leq w, \;\; j=2w+1,\\
 & \Hilbert{x_{w+i}}{\mathfrak{p}}{\mathfrak{p}}  \quad \quad 1 \leq i \leq w,\;\: j=2w+2 \quad \text{if  $n^2 \not\equiv 1 \pmod{9}$}. 
 \end{cases}
\end{equation}
The symbol $\Hilbert{a}{b}{\mathfrak{\pi}}$ is the cubic Hilbert symbol, where $a,b \in K^{*}$ and $\pi$ is a prime of $\OO_K$. Let $v_{\pi}$ denote the $\pi$-adic valuation. Then the Hilbert symbol is computed as follows:
\begin{align}
\label{Hilbertsymbol}
\Hilbert{a}{b}{\pi}=\legendre[3]{c}{\pi}, \text{ where } c=(-1)^{v_{\pi}(a)v_{\pi}(b)}a^{v_{\pi}(b)}b^{-v_{\pi}(a)}
\end{align}
and $\legendre[3]{*}{*}$ is the cubic reciprocity symbol (see \cite{lem} for details). By Lemma \ref{ramification}, $\mathfrak{p}$ ramifies in $L = \mathbb{Q}( \zeta, \sqrt[3]{n})$  if and only if $n^2 \not\equiv 1 \pmod{9}$ and using this, it follows that the definition of $B$ matrix in \eqref{B} is consistent with \cite[\S4]{ger}.  With this set up, we are now ready to express $h_3(n)$ in terms of the cubic Hilbert symbols.
\begin{lemma} \cite[Lemma 4.4]{ger}
\label{cubic3rank} Let $n$ be an integer of the form given in \eqref{n}. Then the $3$-rank of  $\operatorname{Cl}_{\Q(\sqrt[3]{n})}$  is given by $ h_{3}(n)=2w -\text{rank } B$, where $ B $ is the $w\times u$ matrix over $\F_3$, defined in \eqref{B}.
\end{lemma}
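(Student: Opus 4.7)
The plan is to follow Gerth's genus-theoretic Kummer-theoretic argument from \cite[\S4]{ger}, which computes $3$-ranks of pure cubic fields via cubic Hilbert symbols. First, I would lift everything to the Kummer extension $L = K(\sqrt[3]{n})$ of $K = \Q(\zeta_3)$. Since $[L:F] = 2$ is coprime to $3$, base change induces an injection $\operatorname{Cl}_F[3] \hookrightarrow \operatorname{Cl}_L[3]$ whose image is the $\operatorname{Gal}(L/F)$-fixed subspace, and the triviality of $\operatorname{Cl}_K$ further identifies this subspace with a specific $\operatorname{Gal}(L/K)$-eigenspace inside $\operatorname{Cl}_L[3]$; this reduces the computation of $h_3(n)$ to computing the $\F_3$-dimension of an explicit eigenspace in $\operatorname{Cl}_L[3]$.

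Next, by global class field theory, $\operatorname{Cl}_L[3]$ is Pontryagin dual to the Galois group of the maximal unramified elementary abelian $3$-extension of $L$. Since $\zeta_3 \in L$, Kummer theory identifies this dual group with a subgroup of $L^{\ast}/(L^{\ast})^3$ cut out by local unramification conditions at every prime of $L$. After a Galois-equivariant descent to the target eigenspace, the candidate Kummer representatives can be chosen inside $K^{\ast}/(K^{\ast})^3$, and a bookkeeping argument using the factorisation \eqref{n} together with the unit group $\OO_K^{\ast} = \{\pm \zeta^j\}$ shows that the ambient candidate $\F_3$-space has dimension exactly $2w$, with natural generators closely tied to the entries $x_{w+1}, \ldots, x_{2w}$ of \eqref{x}. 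The special forms of those entries --- with exponents $h_i$ and $\alpha$ chosen so that $p_{v+1} p_i^{h_i} \equiv 1 \pmod{9}$ and $2^{\alpha} p_{v+1} \equiv \pm 1 \pmod{9}$ --- are engineered precisely so that the corresponding cubic extensions of $L$ can ramify only at the $u = 2w + 2$ primes indexing the columns of $B$.

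At each such column prime (namely $\pi_m$, $\pi_m^{\prime}$, the rational prime $2$, and $\p$ when $n^2 \not\equiv 1 \pmod{9}$), the unramification of $L(\sqrt[3]{x_{w+i}})/L$ at that prime translates, via local class field theory and explicit cubic reciprocity, into the vanishing of the corresponding cubic Hilbert symbol, computed by formula \eqref{Hilbertsymbol}. Collecting these conditions across all $w$ generators (rows) and all $u$ primes (columns) produces exactly the matrix $B$ of \eqref{B}; the target eigenspace is then the kernel of the induced linear map from the $2w$-dimensional candidate space to a $u$-dimensional local space, and a dimension count yields $h_3(n) = 2w - \operatorname{rank} B$.

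The main obstacle will be the delicate bookkeeping in the second paragraph: correctly setting up the Galois-equivariant descent so that the intended eigenspace is isolated, justifying that the ambient candidate space has $\F_3$-dimension exactly $2w$ (and not $w$ or $2w + k$ for some $k$), and distinguishing the case $w > v$ from the case $w = v$ in \eqref{x} (which controls whether the $2$- and $\p$-columns in $B$ contribute independent constraints). Tracking the contributions from $\OO_K^{\ast}$, from the wild behaviour at $\p$ (governed by Lemma \ref{ramification}(ii)--(iii)), and from the need to treat $\pi_m$ and $\pi_m^{\prime}$ in the pair $(\pi_m, \pi_m^{\prime})$ as giving distinct local conditions, is the subtlest part of the argument.
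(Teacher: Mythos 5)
The paper does not actually prove this lemma: it is quoted directly from Gerth \cite[Lemma 4.4]{ger}, so there is no in-paper argument to compare yours against, and what can be judged is whether your reconstruction of Gerth's proof is sound. Your overall strategy --- pass from $F=\Q(\sqrt[3]{n})$ to $L=K(\sqrt[3]{n})$, identify $\operatorname{Cl}_F[3]$ with an eigenspace of $\operatorname{Cl}_L[3]$ using that $[L:F]=2$ is prime to $3$ and $\operatorname{Cl}_K$ is trivial, and then detect unramified cyclic cubic extensions of $L$ by Kummer theory with local conditions expressed through cubic Hilbert symbols at the ramified primes --- is indeed the genus-theoretic method underlying Gerth's computation (he phrases it via the ambiguous class number formula and norm residue symbols, which is the class-field-theoretic dual of your Kummer-theoretic formulation), and the translation of ``unramified at $\q$'' into the vanishing of $\Hilbert{x_{w+i}}{n}{\q}$ is the right mechanism.

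There is, however, a concrete gap in the dimension count, and it sits exactly where you defer to ``bookkeeping.'' You describe the target eigenspace as the kernel of a single linear map from a $2w$-dimensional candidate space to a $u$-dimensional local space and conclude $h_3(n)=2w-\operatorname{rank}B$. But the matrix $B$ of \eqref{B} has only $w$ rows, indexed by $x_{w+1},\dots,x_{2w}$; the first $w$ entries $\pi_{1}\pi_{1}^{\prime 2},\dots,\pi_{w}\pi_{w}^{\prime 2}$ of the tuple \eqref{x} contribute no rows at all. If the formula really were a kernel computation for one map out of a $2w$-dimensional space, its matrix would be $2w\times u$, and you would still owe an argument that the $w$ rows attached to $x_1,\dots,x_w$ vanish identically --- equivalently, that the classes supported on the ramified primes $\pi_i,\pi_i'$ contribute $w$ unconditional dimensions. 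In Gerth's argument this is a separate step (the contribution of the strongly ambiguous classes, controlled by the ambiguous class number formula, not by Hilbert-symbol conditions on the $x_{w+i}$), and it is precisely what forces the lower bound $h_3(n)\ge w$ that the paper exploits in Lemmas \ref{lem2l=1,3rank} and \ref{lem3l=7,4,3rank}. Without supplying that step, your sketch yields at best an inequality relating $h_3(n)$ to the rank of an undetermined $2w\times u$ matrix, not the stated equality, so the proposal as written does not establish the lemma.
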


\subsection{Isogeny induced Selmer groups and the \texorpdfstring{$p$}{lg}-parity conjecture}\label{subsecselmer}
 Throughout, we fix an embedding  $\iota_\infty: \bar{\mathbb{Q}} \hookrightarrow \mathbb{C}$ of a fixed algebraic closure  $\bar{\mathbb{Q}}$ of $\mathbb{Q}$ into $\C$ and also an embedding $\iota_p: \bar{\mathbb{Q}} \hookrightarrow \bar{\mathbb{Q}}_p$; into a fixed algebraic closure $\bar{\mathbb{Q}}_p$ of $\mathbb{Q}_p$ for every prime number $p$. Let $F$ be a number field, and let $\Omega_F$ denote the set of all (Archimedean and non-Archimedean) places of $F$. For each place $v \in \Omega_F$, let $F_v$ denote the completion of $F$ at $v$. For $T \in \{F, F_v\}$, denote by $G_T := \operatorname{Gal}(\bar{T}/T)$ the absolute Galois group of $T$.

 Let $E$, $\widehat{E}$ be elliptic curves over $F$ and $\varphi: E \to \widehat{E}$ be an isogeny defined  over $F$. For $T \in \{F, F_{v}\}$, let \begin{small}$\delta_{\varphi, T}:\widehat{E}(T) \longrightarrow \widehat{E}(T)/\varphi(E(T)) \lhook\joinrel\xrightarrow{\bar{\delta}_{\varphi, T}} H^1(G_T, E[\varphi])$\end{small} be the Kummer map. We have the following commutative diagram:
 \begin{small}
\begin{center}\begin{tikzcd}
 0 \arrow[r] & \widehat{E}(F)/\varphi(E(F)) \arrow[r, "\bar{\delta}_{\varphi, F}"] \arrow[d]
& H^1(G_F, E[\varphi]) \arrow[d, "\underset{v\in \Omega_F}{\prod} {\rm res}_v"] \arrow[r] & H^1(G_F,E)[\varphi] \arrow[d] \arrow[r] & 0\\
 0 \arrow[r] & \underset{v \in \Omega_F}{\prod} \widehat{E}(F_v)/\varphi(E(F_v)) \arrow[r, "\underset{v \in \Omega_F}{\prod} \bar{\delta}_{\varphi,F_v}"] & \underset{v \in \Omega_F}{\prod} H^1(G_{F_v},E[\varphi]) \arrow[r] & \underset{v \in \Omega_F}{\prod} H^1(G_{F_v},E)[\varphi] \arrow[r] & 0.
\end{tikzcd}\end{center}
\end{small}
\begin{defn}\label{mainsel}
The $\varphi$-Selmer group of $E$ over $F$, $S_\varphi(E/F)$ is defined as
\begin{small}
$${S}_\varphi(E/F)= \{c \in H^1(G_F,E[\varphi]) \mid {\rm res}_v(c) \in {\rm Image}(\delta_{\varphi,F_v}), \text{ for every } v \in \Omega_F \}.$$\end{small}
\end{defn}
Setting \begin{small}$\Sh(E/F):=\text{Ker}\big( H^1(G_F,E) \to \underset{v \in \Omega_F}{\prod} H^1(G_{F_v},E) \big)$\end{small}, the Tate-Shafarevich group of $E$ over $F$, we get the fundamental exact sequence:
\begin{small}
\begin{equation}\label{eq:defofsha}
0 \longrightarrow {\widehat{E}(F)}/{\varphi(E(F))} \longrightarrow { S}_\varphi(E/F) \longrightarrow \Sh(E/F)[\varphi] \longrightarrow 0
\end{equation}
\end{small}
In particular, for $\varphi=[n]: E(\bar{F}) \overset{\times n}{\longrightarrow} E(\bar{F})$, the multiplication by $n$ map, we have the $n$-Selmer group $S_n(E/F)$. The fundamental $n$-descent exact sequence is given by
\begin{small}
\begin{equation}\label{n-selmer-d}
0 \longrightarrow \frac{{E}(F)}{n(E(F))} \longrightarrow {S}_n(E/F) \longrightarrow \Sh(E/F)[n] \longrightarrow 0.
\end{equation}
\end{small}
We fix a rational prime $p$. Next, we discuss $p^\infty$-Selmer group and the $p$-parity conjecture over $\Q$.  For an abelian group $A$, define $A[p^\infty]:=\underset{n \geq 1}{\cup}A[p^n]$ and set  $E_{p^\infty}:=E(\bar{\Q})[p^\infty]=\underset{n \geq 1}{\cup}E(\bar{\Q})[p^n].$ Then the $p$-primary Selmer group of $E$ over  $\Q$, $S_{p^\infty}(E/\Q)$ is defined by
\begin{small}
\begin{equation}\label{p-primary-sel}
S_{p^\infty}(E/\Q)=\text{Ker}\big(H^1(\Q, E_{p^\infty})\lra \underset{\text{all places } q}{\prod}H^1(\Q_q, E)\big).   \end{equation}
\end{small}
Here, the product is taken over all non-Archimedean and Archimedean  places   of $\Q$. Let $w(E/\Q) \in \{\pm 1\}$ be the global root number of $E$ over $\Q$ (see \cite{roh}). Then the $p$-parity conjecture in this setting states that $\operatorname{dim}_{\Q_p}\ \operatorname{Hom}_{\Z_p}\big(S_{p^\infty}(E/\Q),\Q_p/{\Z_p}\big)\otimes_{\Z_p}\Q_p$ is even if and only if $ w(E/\mathbb{Q})=1 $.  We need the following results establishing the $p$-parity conjecture for an elliptic curve over $\Q$; for $p=2$ it is due to Kramer, Monsky (see \cite[Theorem 1.5]{mon}) and  for an odd prime $p$, due to Nekov\'a\v r, Kim and Dokchitser-Dokchitser (cf. \cite{nek}).  
\begin{theorem}\label{s2w}  
Let $ E/\Q$ be an elliptic curve and $p$ a prime number. Then \begin{small}$$\operatorname{dim}_{\Q_p}\ \operatorname{Hom}_{\Z_p}\big(S_{p^\infty}(E/\Q),\Q_p/{\Z_p}\big)\otimes_{\Z_p} {\Q_p} \text{ is even if  only if }  w(E/\mathbb{Q})=1 .$$\end{small} 
\end{theorem}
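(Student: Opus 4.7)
The final statement is the full $p$-parity conjecture for elliptic curves over $\Q$. I cannot realistically reprove this deep theorem in a short note, so my proposal is to outline the strategies in the works cited by the authors (Kramer--Monsky for $p=2$; Nekov\'a\v r, Kim, and Dokchitser--Dokchitser for odd $p$), indicating which pieces would need to be imported and where the genuine difficulty lies.

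For the case $p=2$, the plan is to follow Monsky's refinement of Kramer. The idea is to obtain a closed formula for $\dim_{\F_2} S_2(E/\Q) \pmod 2$ as a sum over all places $v$ of purely local invariants of $E/\Q_v$, and then to verify place by place that each local invariant agrees modulo $2$ with the local root number $w_v(E/\Q_v)$. Summing and applying the global product formula $w(E/\Q)=\prod_v w_v(E/\Q_v)$ yields the parity statement for the $2$-Selmer rank, after which one compares the parity of $\dim_{\F_2} S_2(E/\Q)$ with the corank of $S_{2^\infty}(E/\Q)$ via the Cassels--Tate pairing exactly as in Lemma \ref{2and2infinity}. The delicate places are those of bad reduction and the prime $2$ itself, where one needs Tate local duality together with an explicit description of the image of the Kummer map $\delta_{2,\Q_v}$.

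For $p$ odd, my plan is to follow the Dokchitser--Dokchitser regulator-constants framework. The three steps are: (i) for a self-dual $G_\Q$-representation, define a regulator-type invariant whose parity coincides with the Selmer parity, and show it is invariant under Brauer relations $\Theta=\sum n_i \mathrm{Ind}_{H_i}^G \mathbf{1} = 0$ in soluble extensions; (ii) establish a base case of the $p$-parity conjecture, say for elliptic curves with good ordinary reduction at $p$, via Nekov\'a\v r's machinery of Selmer complexes, the generalized Cassels--Tate pairing and the functional equation for $p$-adic heights, combined with Kato's Euler system; (iii) exhibit a Brauer relation in a suitably chosen Heisenberg-type soluble extension $F/\Q$ that expresses an arbitrary curve in terms of curves in the base-case locus, and use the compatibility of global root numbers with induction to match the two sides.

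The main obstacle is unquestionably step (ii): even the ordinary base case rests on the Iwasawa main conjecture over the cyclotomic $\Z_p$-extension and on Nekov\'a\v r's duality for Selmer complexes, neither of which admits a short proof. For the application within the present article one in fact only needs the $p$-parity statement for the CM curves $E_{-432n^2}$ (which have complex multiplication by $\Z[\zeta_3]$); here an alternative route through Rubin's proof of the Iwasawa main conjecture for $K=\Q(\sqrt{-3})$ is available and, in principle, gives a somewhat more self-contained argument. In either setting, once the parity of the $p^\infty$-Selmer corank has been tied to $w(E/\Q)$, the reduction to the parity of $\dim_{\F_p} S_p(E/\Q)$ needed elsewhere in the paper proceeds through the Cassels--Tate pairing on $\Sh(E/\Q)/\Sh(E/\Q)_{\mathrm{div}}$ as recalled in \eqref{n-selmer-d}.
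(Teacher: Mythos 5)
The paper offers no proof of this statement: it is quoted as a known theorem, with the $p=2$ case attributed to Kramer and Monsky and the odd-$p$ case to Nekov\'a\v r, Kim and Dokchitser--Dokchitser, exactly the sources whose strategies you outline. Your summary of those strategies is accurate and your deferral to the literature matches what the paper itself does, so there is nothing to correct.
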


\section{Proofs of the main results}
\label{proof1.2}
\noindent In this section, we prove our results stated in the introduction.  We begin with some preparation. At first we discuss the following Lemma:
\begin{lemma}\label{2and2infinity}
Let $ E/\Q$ be an elliptic curve with $E(\Q)[p]=0$ for some prime $p$. Then \begin{small}$$\mathrm{dim}_{\Q_p}\ \mathrm{Hom}_{\Z_p}\big(S_{p^\infty}(E/\Q),\Q_p/{\Z_p}\big)\otimes_{\Z_p} \Q_p \equiv \mathrm{dim}_{\F_p}\ S_p(E/\Q) \pmod 2.$$\end{small}
\end{lemma}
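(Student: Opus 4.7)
The plan is to compute the two quantities by ``peeling off'' the Mordell-Weil contribution from each Selmer group, reducing the congruence to a statement purely about $\Sh(E/\Q)$, and then invoke the Cassels-Tate pairing to handle the Tate-Shafarevich side.

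First I would write down, side by side, the two fundamental descent sequences. From \eqref{n-selmer-d} applied to the multiplication-by-$p$ isogeny,
\begin{equation*}
0 \longrightarrow E(\Q)/p E(\Q) \longrightarrow S_p(E/\Q) \longrightarrow \Sh(E/\Q)[p] \longrightarrow 0,
\end{equation*}
while the $p^\infty$-descent sequence reads
\begin{equation*}
0 \longrightarrow E(\Q)\otimes_{\Z} \Q_p/\Z_p \longrightarrow S_{p^\infty}(E/\Q) \longrightarrow \Sh(E/\Q)[p^\infty] \longrightarrow 0.
\end{equation*}
Let $r:=\operatorname{rank}_{\Z}E(\Q)$. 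Then the $\Z_p$-corank of $E(\Q)\otimes\Q_p/\Z_p$ equals $r$. On the other hand, using $E(\Q)\cong \Z^{r}\oplus E(\Q)_{\tor}$, we have $\dim_{\F_p} E(\Q)/pE(\Q) = r + \dim_{\F_p} E(\Q)_{\tor}[p]$, and the standing hypothesis $E(\Q)[p]=0$ kills the torsion term. Hence both $\operatorname{corank}_{\Z_p}S_{p^\infty}(E/\Q)$ and $\dim_{\F_p}S_p(E/\Q)$ equal $r$ plus a $\Sh$-contribution, so the claim reduces to showing
\begin{equation*}
\operatorname{corank}_{\Z_p}\Sh(E/\Q)[p^\infty] \equiv \dim_{\F_p}\Sh(E/\Q)[p] \pmod 2.
\end{equation*}

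Next, I would write the cofinitely generated $\Z_p$-module $\Sh(E/\Q)[p^\infty]$ as $D\oplus \text{(finite)}$ (noncanonically), or more cleanly as an extension $0\to D\to \Sh(E/\Q)[p^\infty]\to \Phi\to 0$ with $D\cong (\Q_p/\Z_p)^{s}$ the maximal divisible submodule ($s$ being its corank) and $\Phi$ finite. Taking $p$-torsion and using $D/pD=0$ gives a short exact sequence $0\to D[p]\to \Sh(E/\Q)[p]\to \Phi[p]\to 0$ of $\F_p$-vector spaces, so $\dim_{\F_p}\Sh(E/\Q)[p] = s + \dim_{\F_p}\Phi[p]$. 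The congruence we need thus becomes $\dim_{\F_p}\Phi[p]\equiv 0\pmod 2$.

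Finally, the main input: the Cassels-Tate pairing induces a non-degenerate alternating pairing on the finite group $\Sh(E/\Q)/\Sh(E/\Q)_{\mathrm{div}}$, whose $p$-primary part is exactly $\Phi$. A finite abelian $p$-group carrying a non-degenerate alternating pairing into $\Q/\Z$ is isomorphic to $G\oplus G$ for some $G$ (equivalently, has square order and even-dimensional $p$-torsion), so $\dim_{\F_p}\Phi[p]$ is even. Combining this with the computation of the previous paragraph yields the desired parity identity.

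I do not expect a real obstacle: the argument is structural and all the ingredients (fundamental exact sequences, the hypothesis $E(\Q)[p]=0$ trivializing torsion mod $p$, and the alternating Cassels-Tate pairing on $\Sh/\Sh_{\mathrm{div}}$) are standard. The only mild care needed is the passage from a non-degenerate alternating pairing on a finite abelian $p$-group to even-dimensionality of its $p$-torsion, which follows from the standard classification of such pairings.
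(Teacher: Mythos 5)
Your proposal is correct and follows essentially the same route as the paper: both proofs pair the $p$- and $p^\infty$-descent sequences, use $E(\Q)[p]=0$ to reduce the Mordell--Weil contribution on each side to $\operatorname{rank}_{\Z}E(\Q)$, and invoke the alternating Cassels--Tate pairing on $\Sh(E/\Q)[p^\infty]$ modulo its divisible part to show the finite quotient has even $p$-rank. The only difference is cosmetic (you isolate the $\Sh$-congruence first, while the paper carries the rank $r$ through both computations and compares at the end).
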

\begin{proof}

 We have  $\Sh(E/\Q)[p^\infty]\cong (\Q_p/{\Z_p})^t\oplus A $, where $t \geq 0 $ and $A$ is a $p$-primary finite abelian  group. Now there is a non-degenerate, alternating (Cassels-Tate) pairing on $p$-primary Tate-Shafarevich group modulo its maximal $p$-divisible subgroup i.e. on $\frac{\Sh(E/\Q)[p^\infty]}{\Sh(E/\Q)[p^\infty]_\text{div}}$ (see \cite[Theorem 1.2]{cass2}) and as a consequence $A\cong B\oplus B$, for some group $B$. It follows that 
 \begin{small}
\begin{equation}\label{sha-p-dim}
     \mathrm{dim}_{\F_p} \ \Sh
(E/\Q)[p] \equiv t \pmod 2.
 \end{equation}\end{small}Recall $r_{\operatorname{al}}(E):=\mathrm{rank}_{\Z}\ E(\Q)$. Using  \eqref{n-selmer-d}, we  get an exact sequence  
 \begin{small}
 \begin{equation}\label{p-infty-sel-exact}
   0\lra   (\Q_p/{\Z_p})^{r_{\operatorname{al}}(E)}\lra S_{p^\infty}(E/\Q)\lra (\Q_p/{\Z_p})^{t}\oplus B \oplus B \lra 0.
 \end{equation}
  \end{small}
On the other hand, it follows from \eqref{n-selmer-d} that 
 \begin{small}\begin{equation}
\text{dim}_{\F_p}\ S_p(E/\Q)= r_{\operatorname{al}}(E) +  \text{dim}_{\F_p}\ E(\Q)[p] + \text{dim}_{\F_p}\ \Sh(E/\Q)[p].
\end{equation}
 \end{small}
Further, using the hypothesis $E(\Q)[p]=0$, we deduce from \eqref{sha-p-dim} that  
 \begin{small}
\begin{equation}\label{p-sel-exact}
 \mathrm{dim}_{\F_p} \ S_p
(E/\Q) \equiv r_{\operatorname{al}}(E)+t \pmod 2.   
\end{equation} \end{small}Now the assertion of the lemma is immediate from \eqref{p-infty-sel-exact} and \eqref{p-sel-exact}.
\end{proof}

 Let $E: y^2=f(x)$ be an elliptic curve over $\Q$ with $E(\Q)[2]=0$. Then the cubic polynomial $f(x)$ is irreducible over $\Q$ and set $F:=\frac{\Q[x]}{(f(x))}$. Then  $F$ is a cubic subfield of $\Q(E[2])$ and any such cubic subfields of $\Q(E[2])$ are  Galois conjugates. Thus $h_2(F)$, the $2$-rank of $\mathrm{Cl}_F$ (Definition \ref{p-rank-cl}), is the same for any cubic subfield $F$ of $\Q(E[2])$.  We recall the following result due to Brumer-Kramer relating $S_2(E/\Q)$ with $h_2(F)$. 

\begin{proposition}\cite[Proposition 7.1]{bk}
\label{2rankProp}
Let $E/\Q$ be an elliptic curve with $E(\Q)[2]=0$. Then
 \begin{small}\begin{equation}
\label{2rankbound}   
\text{dim}_{\F_2}\ S_2(E/\Q) \leq h_2(F)+ u+ e+ \sum\limits_{p \in \Phi_{a}}(n_{p}-1). 
\end{equation} \end{small}
Here $u =1 $ if the discriminant of $E$ over $\Q$, $\Delta(E)<0$ and $u=2$, if  $\Delta(E)>0$.  Next, $e$ denotes  the cardinality of certain specified subset of rational primes  where $E$ has multiplicative reduction. Further, 
 $\Phi_{a}$ is the  set of rational primes at which $E$ has additive reduction and 
 $n_{p}$ denotes the number of primes lying over $p$ in the ring of integers of $F$. \qed
\end{proposition}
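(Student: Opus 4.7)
The plan is to prove the bound by a classical $2$-descent over the cubic field $F=\Q[x]/(f(x))$, in which $E[2]$ is encoded by the short exact sequence of $G_\Q$-modules
$$0 \longrightarrow E[2] \longrightarrow \mathrm{Res}_{F/\Q}(\mu_2) \xrightarrow{\;N\;} \mu_2 \longrightarrow 0,$$
valid because the irreducibility of $f$, equivalent to $E(\Q)[2]=0$, makes $G_\Q$ act transitively on the three nontrivial $2$-torsion points. Taking Galois cohomology and combining Shapiro's lemma with Kummer theory yields
$$H^1(G_\Q, E[2]) \;\cong\; \ker\!\bigl(F^\times/F^{\times 2} \xrightarrow{\;N\;} \Q^\times/\Q^{\times 2}\bigr),$$
and under this isomorphism the Kummer map $E(\Q)/2E(\Q) \hookrightarrow H^1(G_\Q, E[2])$ becomes the classical Cassels descent $(x_0,y_0) \mapsto (x_0-\alpha)F^{\times 2}$, where $\alpha\in F$ is the image of $x$.

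I would next translate the local Selmer conditions into valuation conditions on a representative $\delta\in F^\times$ of a Selmer class. At every prime $\mathfrak q$ of $F$ above a rational prime of good reduction for $E$, the local Kummer image is unramified, forcing $v_\mathfrak q(\delta)$ to be even. Hence $S_2(E/\Q)$ injects into the subgroup $\Sigma\subset F^\times/F^{\times 2}$ consisting of classes unramified outside the finite set $S$ of primes of $F$ lying over bad primes of $E$, and of square norm in $\Q^\times$. A standard exact sequence relates $\Sigma$ to the $S$-units and the $2$-torsion of the $S$-class group of $F$:
$$\dim_{\F_2}\Sigma \;\le\; \dim_{\F_2} \mathrm{Cl}_F^S[2] \;+\; \dim_{\F_2}\!\bigl(\mathcal O_F^{S,\times}/\mathcal O_F^{S,\times 2}\bigr),$$
where $\mathrm{Cl}_F^S[2]$ maps into $\mathrm{Cl}_F[2]$ (modulo the classes of primes of $S$, which are absorbed in the local count) and $\dim_{\F_2}\mathcal O_F^{S,\times}/\mathcal O_F^{S,\times 2}=u+1+|S|$ by Dirichlet's $S$-unit theorem, the unit rank $u\in\{1,2\}$ being exactly $r_1+r_2-1$ and matching the two cases of $\operatorname{sign}\Delta(E)$. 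Imposing the norm-square condition absorbs the $+1$ coming from the torsion $\mu_2(F)=\{\pm 1\}$.

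The last step is a careful local analysis of the image of $E(\Q_p)/2E(\Q_p) \hookrightarrow \prod_{\mathfrak q \mid p} F_\mathfrak q^\times/F_\mathfrak q^{\times 2}$ at bad primes $p$. At a multiplicative prime in the distinguished subset counted by $e$, Tate uniformization determines the image up to codimension one in the local square classes, yielding the $e$ contribution; at an additive prime $p\in\Phi_a$ the image is cut down further, and only $n_p-1$ of the $n_p$ available square classes at primes of $F$ above $p$ end up contributing, producing the $\sum_{p\in\Phi_a}(n_p-1)$ term. The main obstacle will be exactly this bad-prime bookkeeping: the local Kummer image depends on the Kodaira type, on the component group of the N\'eron model, and on how $p$ splits in $F$, and one must verify in each Kodaira case that the slack in the global inequality at $p$ is at most $n_p-1$. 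The global class-group-and-units step, by contrast, is essentially formal once the descent setup is in place.
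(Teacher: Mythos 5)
The paper does not actually prove this proposition; it is imported verbatim from Brumer--Kramer \cite[Proposition 7.1]{bk} (note the \qed in the statement), so the only meaningful comparison is with that source --- whose proof does follow the strategy you outline: realize $H^1(G_\Q,E[2])$ as the norm-one part of $F^\times/F^{\times 2}$ via the sequence $0\to E[2]\to \operatorname{Ind}\mu_2\to\mu_2\to 0$, bound the everywhere-locally-admissible classes by $\operatorname{Cl}_F[2]$ and $S$-units, and then compute local images at the bad places. Your global step is set up correctly, including the identification of $u$ with the unit rank $r_1+r_2-1$ of $F$ according to the sign of $\Delta(E)$.

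As a proof, however, the proposal has genuine gaps. First, the claim that the local Kummer image is unramified at every prime of good reduction fails at $p=2$: the group $E(\Q_2)/2E(\Q_2)$ has order $2\,|E(\Q_2)[2]|$, which in general exceeds the order of the unramified cohomology, so a Selmer representative $\delta$ may have odd valuation at primes of $F$ above $2$ even when $2\notin\Phi_a$ and $2$ is not a multiplicative prime. Since the stated inequality carries no term for a good prime $2$, a separate local computation at $2$ is unavoidable (Brumer--Kramer devote a section to it); this is not an idle worry for the present paper, where $E_{-432n^2}$ often has good reduction at $2$ while $2$ ramifies in $F=\Q(\sqrt[3]{4n})$. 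Second, the terms $e$ and $\sum_{p\in\Phi_a}(n_p-1)$ are precisely the output of the case-by-case local analysis that you defer as ``bad-prime bookkeeping'': you neither identify the ``certain specified subset'' of multiplicative primes defining $e$ nor verify the $n_p-1$ bound for any Kodaira type, so the two terms that constitute the actual content of the inequality are asserted rather than derived. What you have is a correct outline of the descent framework; the proposition itself is not yet proved.
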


\begin{rem}
\label{Eccoef}
{\rm
 The  $j$-invariant of the elliptic curve $E_{-432n^2}$ vanishes and  hence it does not have multiplicative reduction at any rational prime. Also, the discriminant of $E_{-432n^2}$ over $\Q$ is given by $\Delta(E_{-432n^2})= -2^{12}3^{9}n^4$. Thus  $E_{-432n^2}$ has good reduction at every prime $q\nmid 6n$. Further, it has additive reduction at $3$ and if $n$ is cube-free, then it has additive reduction at every prime $q\mid n$. When $n$ is odd,  it is easy to see that $2$ is prime of good reduction of $E_{-432n^2}/\Q$. 
}
\end{rem}

Now we apply Proposition \ref{2rankProp} to our curve  $E_{-432n^2}$ to complete the proof of Proposition \hyperlink{thm:C}{C}.


\begin{proof}[Proof of Proposition \texorpdfstring{\hyperlink{thm:C}{C}}]
By our assumption in Proposition \hyperlink{thm:C}{C}, $n >2 $ is a cube-free integer with  $\operatorname{cf}(4n) \not\equiv \pm 1 \pmod{9}$. Since $n>2$ is a cube-free integer, it is immediate that $E_{-432n^2}(\Q)[2]=0$. Thus, for  $E_{-432n^2}: y^2=f(x)=x^3- 432n^2$, we can apply Proposition \ref{2rankProp} by taking $F=\Q(\sqrt[3]{4n})$. By Remark \ref{Eccoef}, the discriminant $\Delta(E_{-432n^2})$ is negative and $E_{-432n^2}$ does not have multiplicative reduction at any rational prime. Further, $E_{-432n^2}$ has additive reduction at $3$ and at every prime dividing $n$. Thus  \eqref{2rankbound} reduces to 
 \begin{small}
\begin{equation}
\label{2rankbound-refined}   
\text{dim}_{\F_2}\ S_2(E_{-432n^2}/\Q) \leq h_2(F)+ 1 + \sum\limits_{p \mid 3n}(n_{p}-1). 
\end{equation}
\end{small}
\noindent Further, using the hypothesis $\operatorname{cf}(4n) \not\equiv \pm 1 \pmod{9}$, we can deduce from  Lemma \ref{ramification} that  $ n_{p} =1$ holds for each integer prime $p\mid 3n$. Thus   \eqref{2rankbound-refined} further reduces to
 \begin{small}
\begin{equation}
\label{2rankbound-final}   
\text{dim}_{\F_2}\ S_2(E_{-432n^2}/\Q) \leq h_2(4n)+ 1. 
\end{equation}
\end{small}
 By our assumption, the global root number of $E_{-432n^2}/\Q$, $w(n)=1$. By applying Theorem \ref{s2w}, we obtain that $\mathrm{dim}_{\Q_2}\ \mathrm{Hom}_{\Z_2}\big(S_{2^\infty}(E_{-432n^2}/\Q),\Q_2/{\Z_2}\big)\otimes \Q_2$ is even. As $E_{-432n^2}(\Q)[2]=0$, we have from Lemma \ref{2and2infinity} that $\text{dim}_{\F_2}\ S_2(E_{-432n^2}/\Q)$ is even as well. Further, as  $ n $ is given to be a rational cube sum i.e. $\text{rank}_\Z \ E_{-432n^2}(\Q) >0$ it follows that $ \text{dim}_{\F_2}\ S_2(E_{-432n^2}/\Q) $ is positive. consequently,  $ \text{dim}_{\F_2}\ S_2(E_{-432n^2}/\Q)$ is a positive  even integer and we get from \eqref{2rankbound-final} that
$h_2(4n) \geq 1$. This completes the proof of Proposition \hyperlink{thm:C}{C}. 
\end{proof}

We begin the preparation for the proof of Theorems  \hyperlink{thm:A}{A} \& \hyperlink{thm:D}{D}  with a couple of lemmas.
\begin{lemma}
\label{lem2l=1,3rank}
Suppose that  $\ell$ is a prime with $\ell \equiv 1\pmod 9$. Then the $3$-rank of ideal class group of $F=\mathbb{Q}(\sqrt[3]{2\ell})$ is at least one i.e. $h_3(2\ell) \geq 1$. Moreover, $h_3(2\ell)=2$ if and only if $\legendre[3]{2}{\ell}=1$. 
\end{lemma}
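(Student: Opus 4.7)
My plan is to apply Gerth's explicit formula (Lemma \ref{cubic3rank}) directly to $n = 2\ell$. In the notation of \eqref{n}, we have $f = 1$, $\mu = 0$, $e_1 = 1$, and $p_1 = \ell \equiv 1 \pmod 9$, so $v = w = 1$. Writing $\ell = \pi\pi'$ in $\OO_K$ with $\pi \equiv 1 \pmod{3\OO_K}$, formula \eqref{x} gives $x(n) = (\pi\pi'^2, \ell)$, so $x_{w+1} = \ell$; and since $n^2 = 4\ell^2 \not\equiv 1 \pmod 9$, the last column of $B$ in \eqref{B} is present. Thus $B$ is a $1 \times 4$ matrix over $\F_3$ whose four entries (as exponents of $\zeta$) are the Hilbert symbols $\Hilbert{\ell}{2\ell}{\pi}$, $\Hilbert{\ell}{2\ell}{\pi'}$, $\Hilbert{\ell}{2\ell}{2}$, and $\Hilbert{\ell}{\p}{\p}$. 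Since $\text{rank}\, B \in \{0, 1\}$, Lemma \ref{cubic3rank} immediately yields $h_3(2\ell) = 2 - \text{rank}\, B \geq 1$, establishing the first assertion.

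For the equivalence, my strategy is to show that the third and fourth entries of $B$ vanish unconditionally, so that the rank of $B$ is governed solely by the first two entries. Using \eqref{Hilbertsymbol}, at the prime $2$ one has $v_2(\ell) = 0$, $v_2(2\ell) = 1$, hence $\Hilbert{\ell}{2\ell}{2} = \legendre[3]{\ell}{2}$; since $2$ is inert in $K$ with residue field $\F_4$, this symbol equals the image of $\ell \pmod 2$, which is $1$ because $\ell$ is odd. At the ramified prime $\p$, $\Hilbert{\ell}{\p}{\p} = \legendre[3]{\ell}{\p}$; the hypothesis $\ell \equiv 1 \pmod 9$ gives $\ell \equiv 1 \pmod{\p^4}$, whence $\legendre[3]{\ell}{\p} = 1$ by the standard evaluation of the cubic residue symbol at the ramified prime (see \cite{lem}).

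For the remaining two entries, $v_\pi(\ell) = v_\pi(2\ell) = 1$ yields $\Hilbert{\ell}{2\ell}{\pi} = \legendre[3]{-1/2}{\pi} = \legendre[3]{2}{\pi}^{-1}$, using that $-1$ is a cube, and analogously $\Hilbert{\ell}{2\ell}{\pi'} = \legendre[3]{2}{\pi'}^{-1}$. Since $\pi'$ is the complex conjugate of $\pi$, the cubic residue symbols $\legendre[3]{2}{\pi}$ and $\legendre[3]{2}{\pi'}$ are complex conjugates in $\{1, \zeta, \zeta^2\}$, so they simultaneously equal $1$ if and only if $\legendre[3]{2}{\pi} = 1$, which is by definition the condition $\legendre[3]{2}{\ell} = 1$. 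Combined with the previous paragraph, this gives $\text{rank}\, B = 0$ if and only if $\legendre[3]{2}{\ell} = 1$, i.e. $h_3(2\ell) = 2$ if and only if $\legendre[3]{2}{\ell} = 1$.

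The main technical point I anticipate is the careful identification $\Hilbert{\ell}{\p}{\p} = \legendre[3]{\ell}{\p}$ at the ramified prime and the verification that this symbol is trivial under only the weak congruence $\ell \equiv 1 \pmod 9$; once this is in place, the remaining computations reduce to routine applications of \eqref{Hilbertsymbol}, multiplicativity of $\legendre[3]{\cdot}{\pi}$, and the complex-conjugation relation between $\pi$ and $\pi'$.
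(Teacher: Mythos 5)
Your proposal is correct and follows essentially the same route as the paper: apply Gerth's formula to the $1\times 4$ matrix $B$, observe the entries at $2$ and at $\mathfrak{p}$ vanish unconditionally, and reduce the remaining two (conjugate) entries to the condition $\legendre[3]{2}{\ell}=1$. The only cosmetic difference is that you evaluate $\Hilbert{\ell}{2\ell}{2}$ directly in the residue field $\F_4$ where the paper uses cubic reciprocity, and you justify the vanishing at $\mathfrak{p}$ (via $\ell\equiv 1\pmod{\mathfrak{p}^4}$) in slightly more detail; both are fine.
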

\begin{proof}
We have $ n = 2\ell $ with the prime $ \ell \equiv 1 \pmod{9} $. From the equation \eqref{n}, we obtain $w = v=1$. From \eqref{x}, we can set $ x(n) = (x_1, x_2) = (\pi\pi^{\prime 2}, \ell) $, where $ \ell = \pi \pi' $ represents the prime factorization of $ \ell $ in $\mathcal{O}_K=\Z[\zeta] $.  
Further, from \eqref{B}, $B$ is a $ 1 \times 4$ matrix over $\F_3$ and by Lemma \ref{cubic3rank}, it is immediate that $h_3(n) \geq 1$, proving the first part of the result.  Again from \eqref{B}, the entries $ \beta_{1j} \in \F_3$ of $B$, where $1 \leq j \leq 4$,  are determined as follows:
\begin{small}
\begin{align*}
 \zeta^{\beta_{11}} = \Hilbert{x_{2}}{n}{\pi}=\Hilbert{\ell}{2\ell}{\pi} , \quad  
 \zeta^{\beta_{12}} = \Hilbert{x_{2}}{n}{\pi'}=\Hilbert{\ell}{2\ell}{\pi'}, \quad 
 \zeta^{\beta_{13}} = \Hilbert{x_{2}}{n}{2}=\Hilbert{\ell}{2\ell}{2}, \quad \zeta^{\beta_{14}} = \Hilbert{x_2}{\mathfrak{p}}{\mathfrak{p}}=\Hilbert{\ell}{\mathfrak{p}}{\mathfrak{p}} 
\end{align*}
\end{small}

We will compute the Hilbert symbols $\Hilbert{\ell}{2\ell}{\pi}$, $\Hilbert{\ell}{2\ell}{\pi'}$, $\Hilbert{\ell}{2\ell}{2}$ and $\Hilbert{\ell}{\mathfrak{p}}{\mathfrak{p}}$   individually. Since $\ell = \pi \pi'$ in $\mathbb{Z}[\zeta]$, it follows that $v_{\pi}(\ell) = v_{\pi}(2\ell) = 1$. Therefore, applying \eqref{Hilbertsymbol}, we obtain \begin{small}{$$\Hilbert{\ell}{2\ell}{\pi} = \legendre[3]{1/2}{\pi} = \legendre[3]{4}{\pi}.
$$}\end{small}
Similarly, we have  
$
\Hilbert{\ell}{2\ell}{\pi'} = \legendre[3]{4}{\pi'}.
$  
Moreover, it is known that $\legendre[3]{4}{\pi'} = \legendre[3]{4}{\pi}^{-1}$ (see \cite[chapter 7]{lem}), which implies that $\beta_{11} = -\beta_{12}$ in $\mathbb{F}_3$. Next, using \eqref{Hilbertsymbol}, we compute  
\begin{small}{\begin{equation*}
 \Hilbert{\ell}{2\ell}{2}=\legendre[3]{\ell}{2}=\legendre[3]{\pi \pi'}{2} = \legendre[3]{\pi}{2}\legendre[3]{\pi'}{2} = \legendre[3]{2}{\pi} \legendre[3]{2}{\pi'}.
\end{equation*}}\end{small}
Here, the last equality follows from the law of cubic reciprocity. Since $\legendre[3]{2}{\pi}^{-1} = \legendre[3]{2}{\pi'}$, it follows that $\Hilbert{\ell}{2\ell}{2} = 1.$  Thus, we conclude that $\beta_{13} = 0$ in $\mathbb{F}_3$. Apart from that, since $\ell \equiv 1 \pmod{9}$, we deduce $\Hilbert{\ell}{\mathfrak{p}}{\mathfrak{p}} = 1$ and hence $\beta_{14}=0$. From Lemma \ref{cubic3rank}, we have $h_3(n) = 2 - \text{rank } B$. consequently, $h_3(n) = 2$ if and only if $B$ is the $1 \times 4$ zero matrix over $\mathbb{F}_3$. This occurs precisely when $\beta_{11} = 0$ in $\mathbb{F}_3$, which implies\begin{small}{$$\zeta^{\beta_{11}} = \Hilbert{\ell}{2\ell}{\pi} = \legendre[3]{4}{\pi} = \legendre[3]{2}{\pi}^2= 1.$$}\end{small}
It follows that $h_3(n) = 2 \Leftrightarrow \legendre[3]{2}{\pi} = 1$. Observe that as $\ell$ splits as $ \ell = \pi \pi' $  in $\mathcal{O}_K$, we have $\frac{\Z}{\ell\Z} \cong \frac{\OO_K}{\pi \OO_K}$. 
Thus $\legendre[3]{2}{\pi} = 1 \Leftrightarrow \legendre[3]{2}{\ell} = 1$. This completes the proof of the lemma.  
\end{proof}

\begin{lemma}
\label{lem3l=7,4,3rank}
Let $ n = 12\ell $ (or $ n = 18\ell $, respectively), where $ \ell $ is a prime with $ \ell \equiv 7 \pmod 9 $ (or $\ell \equiv 4 \pmod 9 $, respectively). Then, the 3-rank of the ideal class group of the cubic field $ F = \mathbb{Q}(\sqrt[3]{n}) $ is at least 1. Moreover,  $ h_3(n)=2 $ if and only if $ \legendre[3]{3}{\ell} = 1$.
\end{lemma}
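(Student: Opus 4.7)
The plan is to adapt the template of Lemma \ref{lem2l=1,3rank} to the new feature $3 \mid n$. For both $n = 12\ell$ (with $f=2$, $\mu=1$) and $n = 18\ell$ (with $f=1$, $\mu=2$), the expression \eqref{n} gives $w = 1$ and $v = 0$, since the unique odd prime $\ell$ lies in the range $v+1 \leq i \leq w$. The first case of \eqref{x} then yields $x(n) = (\pi\pi'^2,\; 2^\alpha \ell)$, where $\ell = \pi\pi'$ is the factorisation in $\mathcal{O}_K$ and $\alpha \in \{1,2\}$ is chosen so that $2^\alpha \ell \equiv \pm 1 \pmod 9$. Concretely, for $\ell \equiv 7 \pmod 9$ take $\alpha = 2$, so $x_2 = 4\ell \equiv 1 \pmod 9$; for $\ell \equiv 4 \pmod 9$ take $\alpha = 1$, so $x_2 = 2\ell \equiv -1 \pmod 9$. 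As $3 \mid n$ forces $n^2 \not\equiv 1 \pmod 9$, Lemma \ref{cubic3rank} applies with the $1\times 4$ matrix $B$ of \eqref{B}, and the task reduces to evaluating its four entries $\beta_{11}, \beta_{12}, \beta_{13}, \beta_{14}$.

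The tame entries $\beta_{11}, \beta_{12}, \beta_{13}$ fall directly out of \eqref{Hilbertsymbol}. For $\pi \mid \ell$, both $x_2$ and $n$ have $\pi$-valuation $1$, so after cancelling $\ell$ and using $\legendre[3]{-1}{\pi} = 1$ the argument of the cubic residue symbol becomes a power of $3^{\pm 1}$ (the exponent being $-1$ for $n = 12\ell$ and $+1$ for $n = 18\ell$), and $\beta_{11}$ is identified with a nontrivial power of $\legendre[3]{3}{\pi}$ in $\F_3$. The analogous computation at $\pi'$, combined with $\legendre[3]{3}{\pi'} = \legendre[3]{3}{\pi}^{-1}$, yields $\beta_{12} = -\beta_{11}$. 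At the inert prime $2$, the residue-field identification $\mathcal{O}_K/(2) \cong \F_4$ together with $3 \equiv 1 \pmod 2$ gives $\legendre[3]{3}{2} = 1$, which forces $\beta_{13} = 0$.

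The main obstacle is the wild symbol $\beta_{14} = \Hilbert{x_2}{\mathfrak{p}}{\mathfrak{p}}$ at the prime above $3$. Here the choice of $\alpha$ is tailored for precisely this purpose: $x_2 \equiv \pm 1 \pmod 9$ means either $x_2$ or $-x_2$ lies in $1 + 9\mathcal{O}_{K_\mathfrak{p}}$. Since $\mathfrak{p}^2 \mathcal{O}_{K_\mathfrak{p}} = 3\mathcal{O}_{K_\mathfrak{p}}$, this group equals $1 + \mathfrak{p}^4 \mathcal{O}_{K_\mathfrak{p}}$, and the exponential isomorphism $\mathfrak{p}^2 \mathcal{O}_{K_\mathfrak{p}} \isomto 1 + \mathfrak{p}^2 \mathcal{O}_{K_\mathfrak{p}}$ (which is valid in $K_\mathfrak{p}$ as its residue characteristic is $3$) intertwines cubing with multiplication by $3$, giving $(1 + \mathfrak{p}^2 \mathcal{O}_{K_\mathfrak{p}})^3 = 1 + \mathfrak{p}^4 \mathcal{O}_{K_\mathfrak{p}}$. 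Hence $\pm x_2$, and therefore $x_2$ itself (using $-1 = (-1)^3$), is a cube in $K_\mathfrak{p}^*$, forcing $\beta_{14} = 0$.

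Assembling these computations, $B = (\beta_{11},\, -\beta_{11},\, 0,\, 0)$, so $\mathrm{rank}\, B \leq 1$ and Lemma \ref{cubic3rank} yields $h_3(n) \geq 2 - 1 = 1$. Moreover $\mathrm{rank}\, B = 0$ if and only if $\beta_{11} = 0$, i.e.\ $\legendre[3]{3}{\pi} = 1$. Via the isomorphism $\mathcal{O}_K/\pi \cong \Z/\ell\Z$ induced by the splitting $\ell = \pi\pi'$, this is equivalent to $\legendre[3]{3}{\ell} = 1$, and hence $h_3(n) = 2 \iff \legendre[3]{3}{\ell} = 1$, completing the proof.
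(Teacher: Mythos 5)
Your proposal is correct and follows essentially the same route as the paper: the paper's own proof of this lemma is a terse "proceed as in Lemma \ref{lem2l=1,3rank}" with $w=1$, $v=0$, $x(n)=(\pi\pi'^{2},2^{\alpha}\ell)$, and your computation of the four entries of $B$ (including the careful treatment of the wild symbol $\beta_{14}$ via $x_2\equiv\pm1\pmod 9$ and $1+9\mathcal{O}_{K_{\mathfrak p}}\subseteq (K_{\mathfrak p}^{*})^{3}$) is exactly the omitted verification. The only difference is that you supply more detail than the paper does.
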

\begin{proof}
We proceed in a similar way as in  Lemma \ref{lem2l=1,3rank}. Suppose that $ n = 12\ell $ with the prime $ \ell \equiv 7 \pmod{9} $.  From  \eqref{n}, we obtain $w =1$  and $v=0$. Next consider $ x(n) = (x_1, x_2) = (\pi\pi^{\prime2}, 2^{2}\ell) $, where $ \ell $ splits in $\Z[\zeta] $ as $\ell=\pi \pi' $.  By \eqref{B}, we get that  $B$ is a $ 1 \times 4 $ matrix over $\F_3$. As before, applying Lemma \ref{cubic3rank}, we deduce that $h_3(n) = 2$ if and only if $\beta_{1j}=0$ in $\mathbb{F}_3$ for $1\leq j \leq 4$, which occurs precisely when $\beta_{11} = 0 \Leftrightarrow \legendre[3]{3}{\ell} = 1$, as required. The proof in the other case is also similar. \end{proof}
The cubic residue symbol in the above lemmas plays an important role in determining whether the corresponding integer is cube sum.
\begin{proposition}
 \label{prop2l=1cubic}  
 Let $\ell \equiv 1 \pmod{9}$ be a prime, and suppose $n \in \{2\ell, 2\ell^2\}$. If $n$ can be expressed as a sum of two rational cubes, then $\left(\frac{2}{\ell}\right)_3 = 1$.
\end{proposition}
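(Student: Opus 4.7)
The plan is to combine the explicit computation of the 3-isogeny Selmer group in \cite[Theorem 1.2]{jms} with the 3-parity conjecture to force $\legendre[3]{2}{\ell} = 1$ whenever $n \in \{2\ell, 2\ell^2\}$ is a cube sum. A key feature of this setting is that $\ell \equiv 1 \pmod 9$ and $n \in \{2\ell, 2\ell^2\}$ imply $3 \nmid n$, which lets one pin down the image of the local Kummer map at the unique prime of $K = \Q(\sqrt{-3})$ above $3$ precisely (the obstruction flagged in Remark \ref{lastrem} for the $3\mid n$ case does not arise here), allowing the sharp formula of \cite{jms} to be used.

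First I would verify via \eqref{rootnumber} that $w(n) = 1$ for both $n = 2\ell$ and $n = 2\ell^2$: since $\ell \equiv 1 \pmod 9$, we get $n \equiv 2 \pmod 9$ and hence $w_3(n) = 1$; at $p = 2$ one has $w_2(n) = -1$; and at $p = \ell \equiv 1 \pmod 3$, $w_\ell(n) = 1$, so $w(n) = -(1)(-1)(1) = 1$. Theorem \ref{s2w} then yields that $\dim_{\Q_3}\operatorname{Hom}_{\Z_3}\bigl(S_{3^\infty}(E_{-432n^2}/\Q),\Q_3/\Z_3\bigr)\otimes_{\Z_3}\Q_3$ is even. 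Since $E_{-432n^2}(\Q)[3] = 0$ (the full 3-torsion is defined only over $\Q(\zeta_3,\sqrt[3]{n})$), Lemma \ref{2and2infinity} forces $\dim_{\F_3} S_3(E_{-432n^2}/\Q)$ to be even as well.

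The cube-sum hypothesis gives $\operatorname{rank}_\Z E_{-432n^2}(\Q) \geq 1$, so via the 3-descent exact sequence \eqref{n-selmer-d} together with the parity above we conclude $\dim_{\F_3} S_3(E_{-432n^2}/\Q) \geq 2$. Using the descent exact sequences attached to the 3-isogeny $\varphi_n : E_{-432n^2} \to E_{16n^2}$ and its dual $\hat\varphi_n$ (which factor $[3]$), together with base change to $K$ through inflation-restriction for $\operatorname{Gal}(K/\Q)$, this lower bound translates into a corresponding lower bound on $\dim_{\F_3} S_{\varphi_n}(E_{-432n^2}/K)$.

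The last step is to apply \cite[Theorem 1.2]{jms}, which under the present hypotheses ($3 \nmid n$, $\ell \equiv 1 \pmod 9$) gives an exact formula for $\dim_{\F_3} S_{\varphi_n}(E_{-432n^2}/K)$ in terms of $\legendre[3]{2}{\ell}$. When $\legendre[3]{2}{\ell} \neq 1$ the predicted Selmer dimension is strictly smaller than what the bound just derived requires, so the only surviving possibility is $\legendre[3]{2}{\ell} = 1$. The main obstacle is making the comparison between $S_3(E_{-432n^2}/\Q)$ and $S_{\varphi_n}(E_{-432n^2}/K)$ sharp enough: one must simultaneously track the Mordell--Weil contribution and the local conditions at primes of bad reduction (especially at $2$, at $\ell$, and at the prime of $K$ above $3$) so that the precise formula from \cite{jms} can be pitted against the parity input to yield the contradiction in the case $\legendre[3]{2}{\ell} \neq 1$.
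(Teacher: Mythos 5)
Your argument is sound in outline, but it is considerably more elaborate than what the paper actually does here. The paper's proof of this proposition is essentially a one-line citation: \cite[Theorem 1.2]{jms} already proves the contrapositive directly, namely that for a prime $\ell \equiv 1 \pmod 9$ with $\legendre[3]{2}{\ell} \neq 1$, both $2\ell$ and $2\ell^2$ fail to be cube sums; the explicit computation of $S_{\varphi_n}(E_{-432n^2}/K)$ and the deduction that the rank vanishes are carried out inside that reference, so no parity input is needed at the level of this paper. What you have reconstructed is, in effect, the strategy the paper reserves for the harder case $3 \mid n$ (Proposition \ref{prop3l=7,4,cubic}): the root number computation, the $3$-parity theorem (Theorem \ref{s2w}) combined with Lemma \ref{2and2infinity}, the comparison of $\dim_{\F_3} S_3(E/\Q)$ with $\dim_{\F_3} S_{\phi_n}(E/K)$ via \cite[Lemma 6.1]{ss} as in \eqref{dimevenphi}, and finally the explicit Selmer bound. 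That transplanted strategy does work for $n \in \{2\ell, 2\ell^2\}$ --- your root number computation $w(n)=1$ is correct, and the Selmer comparison you gesture at with ``inflation-restriction'' is precisely the identity \eqref{dimevenphi} --- but note that the parity step is actually superfluous in your setting: once the cited computation pins $\dim_{\F_3} S_{\varphi_n}(E_{-432n^2}/K)$ down to its minimal value, the rank over $\Q$ is already forced to be $0$, which contradicts the cube-sum hypothesis (rank $\geq 1$) without any appeal to evenness. The parity theorem is genuinely needed only when, as for $3\mid n$, the image of the local Kummer map at the prime above $3$ cannot be determined and one is left with an upper bound of the wrong parity.
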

\begin{proof}
Let $\ell$ be a prime with $\ell \equiv 1\pmod 9$. It is proved in \cite[Theorem 1.2]{jms} that if  $\legendre[3]{2}{\ell} \neq 1$, then both $2\ell$ and $2\ell^2$ are non-cube sums. The curve $E_{-432n^2}$ has a $3$-isogeny $\varphi_n$ over $\Q$  and the main idea of the proof of \cite[Theorem 1.2]{jms} is to explicitly compute  $S_{\varphi_n}(E_{-432n^2}/K)$ when $\legendre[3]{2}{\ell} \neq 1$. 
\end{proof}

As mentioned in the introduction, the elliptic curve $E_{-432n^2}/\Q$ in general has bad, additive reduction at $3$, which makes $3$-descent  more difficult. Further,  it seems there is a scarcity of literature for the cube sum problem  in the case where  $3\mid n$, the root number $w(n)$ is $1$ with potentially positive rank of $E_{-432n^2}(\Q)$. We discuss the set up before going in to the proof of Theorem \hyperlink{thm:A}{A}. 
Recall  that $K=\Q(\zeta)$ and $\mathfrak p=1-\zeta.$ Let $\Sigma_K$ denote the set of all finite places of $K$. For a finite subset $S$  of $\Sigma_K$, $\OO_S=\OO_{K,S}$ denotes the set of $S$-integers of $K$. A general element of $\Sigma_K$  will be denoted by $\q$. 
 Let  $\OO_\q$ be the ring of integers of $K_\q$ and for $T \in \{\OO_S, K, \OO_\q\}$,  
let $N:T^* \times T^* \to T^*$ denotes the `norm' map sending $(x,y) \to xy$ for all $x, y \in T^*$ and set 
\begin{small}$\Big(\frac{T^*}{T^{*3}} \times \frac{T^*}{T^{*3}}\Big)_{N=1}=\text{ker}(\bar{N}):=\Big\{ (\bar{x},  \bar{y}) \in \frac{T^*}{T^{*3}} \times \frac{T^*}{T^{*3}} \mid \bar{x} \bar{y}=\bar{1}\Big\}.$\end{small} It is plain that $\Big(\frac{T^*}{T^{*3}} \times \frac{T^*}{T^{*3}}\Big)_{N=1}\cong \frac{T^*}{T^{*3}}$. Following this isomorphism, to ease the notation, we identify $\Big(\frac{T^*}{T^{*3}} \times \frac{T^*}{T^{*3}}\Big)_{N=1}$ with $ \frac{T^*}{T^{*3}}$ and simply denote an element $(\bar{x}, \bar{x}^2) \in \Big(\frac{T^*}{T^{*3}} \times \frac{T^*}{T^{*3}}\Big)_{N=1}$ by $\bar{x} \in \frac{T^*}{T^{*3}}$.

For any positive integer $n$ consider the elliptic curve $E_{-432n^2}$. We have  degree-$3$ rational  isogenies  $
E_{-432n^2} 
\quad
\mathrel{\substack{\xrightarrow{\varphi_n} \\[-0.7ex] \xleftarrow[\hat{\varphi}_n]{} }}
\quad
E_{16n^2}.$ 
Further, as $-3\in {K^*}^2$, the curves $E_{-432n^2} $ and ${E}_{16n^2}$ are isomorphic over $K$. So we get a $3$-isogeny  over $K$, $\phi_n: E_{-432n^2} \lra E_{-432n^2}$,   given by (see \cite[equation (1)]{jms}): \begin{small}\begin{equation}\label{eq:defofphi}
 \phi_n(x,y)= \left( \frac{x^3+4\cdot(-432n^2)}{\p^2x^2}, \frac{y\big(x^3-8\cdot(-432n^2)\big)}{\p^3x^3} \right).
\end{equation}\end{small} 
Recall that the Kummer map $\delta_{\phi_n, K_q}$ is defined in \S \ref{subsecselmer}. Now the Selmer group $S_{\phi_n}(E_{-432n^2}/K)$ can be explicitly written as follows (see \cite[\S1]{jms}): 
\begin{small} \begin{equation}\label{eq:newseldefsq}
{S}_{\phi_n}(E_{-432n^2}/K)=\{ \overline{x} \in K^*/K^{*3}  \mid \overline{x}\in \text{Image} (\delta_{\phi_n,K_\q}) \text{ for all } \q \in \Sigma_K \}.
\end{equation}\end{small}
\begin{small}  \begin{equation}\label{sn}
\text{Put } 
    S_n  :=\{ \q \in \Sigma_K  \mid \upsilon_\q(4\cdot432n^2) \not\equiv 0 \pmod 6 \}.
 \end{equation}\end{small}
 It follows from \cite[Theorems 3.15 \&  4.14(2)]{jms} that \begin{small}${S}_{\phi_n}(E_{-432n^2}/K) \subset \frac{\OO_{S_n}^*}{\OO_{S_n}^{*3}}
$\end{small} and\\ \begin{small}$ \operatorname{dim}_{\F_3} \ {S}_{\phi_n}(E_{-432n^2}/K) \le \#S_n+1$.\end{small} 
In particular, an element \begin{small}$\overline{x} \in \frac{\OO_{S_n}^*}{\OO_{S_n}^{*3}} $\end{small}  is in \begin{small}${ S}_{\phi_n}(E_{-432n^2}/K) $\end{small}  if and only if $\overline{x} \in \text{Image} (\delta_{\phi_n,K_\q})$ for all  $\q \in \Sigma_K$. With these set up, we can now prove Proposition \ref{prop3l=7,4,cubic}:

\begin{proposition}
\label{prop3l=7,4,cubic}
Let $ n = 3\ell $ (or $ n = 3\ell^2 $, respectively), where $ \ell $ is a prime with $ \ell \equiv 7 \pmod 9$ (or $\ell \equiv 4 \pmod 9 $, respectively). If $n$ is a rational cube sum, then  $\legendre[3]{3}{\ell}=1$.
\end{proposition}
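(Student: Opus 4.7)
The plan is to argue the contrapositive: assuming $\legendre[3]{3}{\ell} \neq 1$, we show that $n$ cannot be a rational cube sum. Three ingredients enter: a lower bound on $\dim_{\F_3} S_3(E_{-432n^2}/\Q)$ from the cube-sum assumption combined with the $3$-parity conjecture, a comparison between $S_3(E_{-432n^2}/\Q)$ and $S_{\phi_n}(E_{-432n^2}/K)$ via the rational $3$-isogeny $\varphi_n$, and an explicit upper bound on $\dim_{\F_3} S_{\phi_n}(E_{-432n^2}/K)$ from \eqref{eq:newseldefsq} whose precise value is controlled by the cubic Hilbert symbol encoding $\legendre[3]{3}{\ell}$.

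First, we check from \eqref{rootnumber} that $w(n) = 1$ in both sub-cases: in either setting $n \equiv \pm 3 \pmod 9$, so $w_3(n) = -1$, while $\ell \equiv 1 \pmod 3$ gives $w_\ell(n) = 1$. If $n$ is a cube sum then $\operatorname{rank}_{\Z} E_{-432n^2}(\Q) \geq 1$, and Theorem \ref{s2w} forces the $\Z_3$-corank of $S_{3^\infty}(E_{-432n^2}/\Q)$ to be even and positive, hence $\geq 2$. Since $n > 1$ is cube-free, the $3$-division polynomial $3x(x^3 - 1728 n^2)$ of $E_{-432n^2}$ has no rational root yielding a rational $3$-torsion point (the root $x = 0$ forces $y = \pm 12 n \sqrt{-3} \notin \Q$, and $x = 12 \sqrt[3]{n^2} \notin \Q$), so $E_{-432n^2}(\Q)[3] = 0$, and applying Lemma \ref{2and2infinity} with $p = 3$ gives $\dim_{\F_3} S_3(E_{-432n^2}/\Q) \geq 2$.

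Next, we exploit the factorisation $[3] = \hat\varphi_n \circ \varphi_n$ over $\Q$, which yields a standard $3$-descent exact sequence bounding $\dim_{\F_3} S_3(E_{-432n^2}/\Q)$ by $\dim_{\F_3} S_{\varphi_n}(E_{-432n^2}/\Q) + \dim_{\F_3} S_{\hat\varphi_n}(E_{16n^2}/\Q)$. Restriction to $K$ together with the $K$-isomorphism $E_{16n^2} \cong E_{-432n^2}$ implicit in \eqref{eq:defofphi} embeds each of those $\Q$-Selmer groups into $S_{\phi_n}(E_{-432n^2}/K)$, with small corrections coming from the rational $3$-torsion point $(0, 4n) \in E_{16n^2}(\Q)$ and from $H^1(\operatorname{Gal}(K/\Q), E[\varphi_n](K))$; the parity of these corrections is pinned down via the Cassels-Tate pairing on $\Sh(E_{-432n^2}/\Q)$, exactly as in Lemma \ref{2and2infinity}. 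Combined with the previous step, this forces $\dim_{\F_3} S_{\phi_n}(E_{-432n^2}/K)$ to exceed a definite positive threshold.

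The hard part will be the upper bound on $\dim_{\F_3} S_{\phi_n}(E_{-432n^2}/K)$ via the explicit description \eqref{eq:newseldefsq}. For $n = 3\ell$, the set $S_n$ of \eqref{sn} works out to $\{\mathfrak{p}, \pi, \pi'\}$ (where $\ell = \pi\pi'$ splits in $K$), so candidate Selmer classes lie in the $4$-dimensional $\F_3$-space $\OO_{S_n}^*/\OO_{S_n}^{*3}$ spanned by $\{\zeta, \mathfrak{p}, \pi, \pi'\}$; the case $n = 3\ell^2$ is parallel. The local Kummer image at $\mathfrak{p}$ is intrinsically hard to pin down since $E_{-432n^2}$ has additive reduction at $3$, so only a coarse $\mathfrak{p}$-adic bound is available; the local conditions at $\pi$ and $\pi'$, by contrast, collapse via \eqref{Hilbertsymbol} to cubic Hilbert symbols whose nontriviality is precisely $\legendre[3]{3}{\ell} \neq 1$. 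The main task is then to combine these computations --- balancing the coarse $\mathfrak{p}$-bound against the precise constraint at $\pi, \pi'$ --- and show that $\legendre[3]{3}{\ell} \neq 1$ cuts $\dim_{\F_3} S_{\phi_n}(E_{-432n^2}/K)$ below the lower bound of the previous paragraph, yielding the required contradiction.
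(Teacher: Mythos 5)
Your overall architecture matches the paper's: argue the contrapositive, use the root number computation plus Theorem \ref{s2w} and Lemma \ref{2and2infinity} to control the parity/size of $S_3(E_{-432n^2}/\Q)$, transfer this to the isogeny Selmer group $S_{\phi_n}(E_{-432n^2}/K)$, and play that off against an explicit bound coming from the local conditions. Your first step is sound (and your verification that $w(n)=1$ and $E_{-432n^2}(\Q)[3]=0$ is correct). However, there are two gaps, one of which is fatal. First, your comparison of $\dim_{\F_3} S_3(E_{-432n^2}/\Q)$ with $\dim_{\F_3} S_{\phi_n}(E_{-432n^2}/K)$ is only gestured at: the paper uses the exact formula of Schaefer--Stoll \cite[Lemma 6.1]{ss}, namely $\dim_{\F_3} S_3(E_{-432n^2}/\Q) = \dim_{\F_3} S_{\phi_n}(E_{-432n^2}/K) - \dim_{\F_3} R - 1$ with $R$ a quotient of Tate--Shafarevich groups of even $\F_3$-dimension by the Cassels--Tate pairing (\cite[Prop.~49]{bes}); your proposed route via $S_{\varphi_n}(E/\Q)$ and $S_{\hat\varphi_n}(E_{16n^2}/\Q)$ and restriction to $K$ leaves the ``small corrections'' uncontrolled, and the assertion that \emph{both} $\Q$-Selmer groups embed into the single group $S_{\phi_n}(E/K)$ is not justified (over $K$ the dual isogeny has its own Selmer group, a priori different). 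Your citation of Lemma \ref{2and2infinity} for pinning down these corrections is a misattribution: that lemma compares $S_{p^\infty}$ with $S_p$, not $\varphi$- with $\hat\varphi$-Selmer groups.

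Second, and decisively: the entire content of the proposition lives in the claim that $\legendre[3]{3}{\ell}\neq 1$ forces $\dim_{\F_3} S_{\phi_n}(E_{-432n^2}/K)\leq 2$, and you have not proved it --- you state it as ``the main task'' and stop. In the paper this is the bulk of the argument: one first shows $(\overline{9\ell^2},\overline{3\ell})$ lies in the Selmer group as the image of the $3$-torsion point $(0,12\ell)$, rules out $(\overline{\zeta}^2,\overline{\zeta})$ at $\pi$, and then lists the $13$ coset representatives of the order-$3$ subgroups of the quotient of $\bigl(\OO_{S_n}^*/\OO_{S_n}^{*3}\times\OO_{S_n}^*/\OO_{S_n}^{*3}\bigr)_{N=1}$ by $\langle(\overline{9\ell^2},\overline{3\ell})\rangle$, eliminating twelve of them one by one via the local condition $\delta_{\phi,K_\q}(E_t(K_\q))\cap\bigl(\OO_\q^*/\OO_\q^{*3}\times\OO_\q^*/\OO_\q^{*3}\bigr)_{N=1}=\{1\}$ at $\q\in\{\pi,\pi'\}$ (\cite[Prop.~4.6]{jms}), together with cubic reciprocity facts such as $\legendre[3]{\zeta}{\pi}=\zeta^2$ and Evans' trick $\legendre[3]{\pi}{\pi'}=1$. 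You correctly identify that the local condition at $\p$ is inaccessible and that the work must happen at $\pi$ and $\pi'$, but identifying where the computation lives is not the same as doing it; as written, the proposal establishes a lower bound with nothing to contradict it.
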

\begin{proof}
We consider the case $n=3\ell,$ where $\ell$ is a prime with $\ell \equiv 7 \pmod 9$. The proof for $n=3\ell^2$ with $\ell \equiv 4 \pmod 9$ is similar. We proved the contrapositive statement i.e. if  $3$ is not a cube modulo $\ell$, then we show that  $\mathrm{rank}_\Z \  E_{-432(3\ell)^2}(\Q) =0$.

We have the rational  $3$-isogenies 
 $
E_{-432(3\ell)^2} 
\quad
\mathrel{\substack{\xrightarrow{\varphi_\ell} \\[-0.7ex] \xleftarrow[\hat{\varphi}_\ell]{} }}
\quad
E_{(12\ell)^2}
$
and these two curves are isomorphic over $K$. In particular, $\mathrm{rank}_\Z \  E_{-432(3\ell)^2}(\Q)  =\mathrm{rank}_\Z \ E_{(12\ell)^2}(\Q)$. Further, note that $E_{-432(3\ell)^2}(\Q)[3]=0$ and $E_{(12\ell)^2}(\Q)[\hat{\varphi}_\ell]\cong \frac{\Z}{3\Z}$.  Setting $R:=\frac{\Sh({E}_{(12\ell)^2}/\Q)[\widehat{\varphi}_\ell]}{{\varphi}_\ell(\Sh({E}_{-432(3\ell)^2}/\Q)[3])}$, 
it follows from \cite[Lemma 6.1]{ss} that  
\begin{small}
\begin{equation}\label{dimevenphi}
    \begin{split}
        \dim_{\F_3} {S}_3({E}_{-432(3\ell)^2}/\Q)  &= \dim_{\F_3} {S}_{\phi_\ell}({E}_{-432(3\ell)^2}/K)-\dim_{\F_3} R - \dim_{\F_3} \frac{{E}_{(12\ell)^2}(\Q)[\hat{\varphi}_\ell]}{\varphi_\ell(E_{-432(3\ell)^2}(\Q)[3])}\\
          &= \dim_{\F_3} {S}_{\phi_\ell}({E}_{(12\ell)^2}/K)-\dim_{\F_3} R - 1\\
    \end{split}
\end{equation}
\end{small}
Further, Cassels-Tate pairing induces a non-degenerate, alternating pairing on $R$, so that $\dim_{\F_3} R$ is even (see \cite[Proposition 49]{bes}). From \eqref{rootnumber}, we get that the global root number of $E_{-432(3\ell)^2}$ over $\Q$,  $w(3\ell)=1$. Thus by applying the $p$-parity result in Theorem \ref{s2w} for $p=3$, we deduce that $\mathrm{dim}_{\Q_3}\ \mathrm{Hom}_{\Z_3}\big(S_{3^\infty}(E_{-432(3\ell)^2}/\Q),\Q_3/{\Z_3}\big)\otimes_{\Z_3} \Q_3$ is even. Moreover, as $E_{-432(3\ell)^2}(\Q)[3]=0$, we obtain  from Lemma \ref{2and2infinity}  that $\dim_{\F_3} {S}_3({E}_{-432(3\ell)^2}/\Q)$ is even as well. Then it is immediate from \eqref{dimevenphi} that $\dim_{\F_3} {S}_{\phi_\ell}({E}_{(12\ell)^2}/K)$ is odd.

Now we claim that:
\begin{small}\begin{equation}\label{mainclaim}
\text{under the assumption } \ {\legendre[3]{3}{\ell}}\neq 1, \text{ we have } \ \mathrm{dim}_{\F_3}\ {S}_{\phi_\ell}(E_{(12\ell)^2}/K) \leq 2.
\end{equation}\end{small}
Assume the claim at the moment. Then it follows from the above discussion that $\mathrm{dim}_{\F_3}\ {S}_{\phi_\ell}(E_{(12\ell)^2}/K) $ must be equal to $1$ and consequently, we deduce from \eqref{dimevenphi} that ${S}_3({E}_{-432(3\ell)^2}/\Q) =0$. 
Then it is plain from \eqref{n-selmer-d} that $\mathrm{rank}_\Z \ E_{-432(3\ell)^2}(\Q)=0$ and hence $n=3\ell$ is a non-cube sum. Thus, it suffices to establish \eqref{mainclaim} to complete the proof of the theorem and in the rest of the proof, we establish \eqref{mainclaim}.

To ease the notation, for the rest of the proof, we write $t=(12\ell)^2$ and put $\phi=\phi_\ell$. Also recall that $\ell$ splits in $\OO_K$ as $\ell=\pi{\pi'}$. Then in the above setting, with $S_{t}=\{\p=1-\zeta, \pi,{\pi'}\}$, we have $\OO^*_{S_t}=\langle \pm \zeta, \p, \pi, {\pi'} \rangle$ and\begin{small}$$S_\phi(E_{t}/K) \subset \frac{\OO_{S_t}^*}{\OO_{S_t}^{*3}} =\langle\overline{\zeta}^2, \  \overline{9}, \  \overline{\pi}^2,  \  \overline{9\ell^2}\rangle.$$\end{small} We see that $\dim_{\F_3} {S}_\phi(E_{t}/K) \le 4$. 
Moreover, by the formula of the Kummer map given in \cite[\S14, \S15]{cass}, we deduce  that $1/\overline{24\ell}  = \overline{9\ell^2} \in {S}_\phi(E_{t}/K)$, being the image of $(0, \ 12\ell)$ under $\delta_{\phi,K_\q}$ for all $\q$. Also $\overline{9\ell^2}$ is a non-zero element of ${S}_\phi(E_{t}/K)$. Next, by \cite[Prop. 4.6(b)]{jms},  for a prime $\q \nmid 3$ in $\OO_K$ with $v_{\q}(4\cdot 144\ell^2) \not\equiv 0 \pmod{6}$, we have that $\delta_{\phi, K_\q}(E_t(K_\q)) \cap \frac{\OO_{\q}^*}{\OO_{\q}^{*3}}   =\{1\}.$ Applying this for $\q=\pi$ and observing that $\ell \equiv 7 \pmod 9$, we deduce that $\overline{\zeta}_3^2 \notin \text{Image}( \delta_{\phi,K_\pi})$ and hence it is not an element of ${S}_\phi(E_{t}/K)$.  consequently,   $1 \le \dim_{\F_3} \ {S}_\phi(E_{t}/K) \le 3$. We will now go on to show that $\dim_{\F_3} \ { S}_\phi(E_{t}/K) \leq 2$, as required.

By our assumption, we have $\ell \equiv 7 \pmod 9$ and 
$\legendre[3]{3}{\ell}\neq 1$. We also have $\overline{9\ell}^2 \in { S}_\phi(E_{t}/K)$. There are $13$ distinct subgroups of $\frac{\OO_{S_t}^*}{\OO_{S_t}^{*3}} $  of order $9$ containing $\overline{9\ell}^2 $.
In fact,  explicitly  the $13$ generators in $\frac{\OO_{S_t}^*}{\OO_{S_t}^{*3}}$ corresponding to order $3$ subgroups of $\frac{\OO_{S_t}^*}{\OO_{S_t}^{*3}}/{ \big\langle \overline{9\ell}^2 \big\rangle}$ are  given by   $\overline{\zeta}^2$, \ $\overline{9}, $ \ $\overline{\pi}^2$, \ $\overline{9\zeta^2}$, \ $\overline{3\zeta^2}$, \ $\overline{9\pi^2}$, \ $\overline{9\pi}$, \ $\overline{\zeta^2\pi^2}$, \ $\overline{\zeta\pi^2}$, \ $\overline{9\zeta^2\pi^2}$, \ $\overline{3\zeta^2\pi^2}$, \ $\overline{9\zeta\pi^2}$ and $\overline{3\zeta\pi^2}$. 
We consider  these $13$ generators and for  each of them, produce a prime $\q$ such that it does not lie in $\operatorname{Image}(\delta_{\phi, K_{\q}})$  and this rules out the possibility that it is an element of ${ S}_\phi(E_{t}/K)$.

We assume that $\left(\frac{3}{\pi}\right)_3 = \zeta$; the case $\left(\frac{3}{\pi}\right)_3 = \zeta^2$ can be handled similarly. Note that, for $\ell \equiv 7 \pmod{9}$,  we have that $\legendre[3]{\zeta}{\pi}=\legendre[3]{\zeta}{\pi'}=\zeta^{2}$ (see \cite[\S 2, chapter 7]{lem}).

We have already noticed  $\overline{\zeta}^2 \notin \text{Image}( \delta_{\phi,K_\pi})$.  Next, we consider  $\overline{9}.$ From  \cite[Prop. 4.6(2)]{jms}, we know that $\delta_{\phi, K_\pi}(E_t(K_\pi)) \cap \frac{\OO_{\pi}^*}{\OO_{\pi}^{*3}}  =\{1\}.$  As  ${\legendre[3]{3}{\pi}}\neq 1$, we get that $\overline{9} \notin \text{Image}(\delta_{\phi,K_\pi}).$ Now we show  $\overline{\pi}^2 \notin { S}_\phi(E_{t}/K)$. Indeed, as $\overline{9\ell}^2 \in { S}_\phi(E_{t}/K)$, if we assume that $\overline{\pi}^2 \in { S}_\phi(E_{t}/K)$, then it will imply that $\overline{9{\pi'}}^2\in { S}_\phi(E_{t}/K)$. However, by Evans' trick \cite[\S7] {lem}, $\legendre[3]{\pi}{\pi'}=1$ and we also have $\legendre[3]{3}{\pi'}=\zeta^2 \neq 1$ and hence  $\overline{9{\pi'}^2} \notin \text{Image}( \delta_{\phi,K_\pi})$, a contradiction. Proceeding in a similar way, we can show that none of $ \overline{3\zeta^2}, \;\overline{9\zeta\pi^2}$ are in the image of  $\delta_{\phi,K_{\pi}}$. On the other hand, none of  $\overline{9\zeta^2}$, $\overline{9\pi^2}$, \ $\overline{9\pi}$, \ $\overline{\zeta^2\pi^2}$, \ $\overline{\zeta\pi^2}$, $\overline{9\zeta^2\pi^2}$, $\overline{3\zeta\pi^2}$ are in the image of $\delta_{\phi,K_{\pi'}}$. 

 Thus other than $\overline{9\ell}^2$,   the  only possible element in ${ S}_\phi(E_{t}/K)$ is  $\overline{3\zeta^2\pi^2}$, whence \\ $\mathrm{dim}_{\F_3}\ {S}_{\phi}(E_{(12\ell)^2}/K) \leq 2$, which establishes \eqref{mainclaim}. This completes the proof of the theorem.\end{proof}
\begin{rem}\label{lastrem}{\rm 
  If we compute $\operatorname{Image}(\delta_{\phi_\ell,K_{\p}})$ explicitly, then we can show that   $\overline{3\zeta^2\pi^2} \notin {S}_{\phi_\ell}(E_{(12\ell)^2}/K)$ and hence $\mathrm{dim}_{\F_3}\ {S}_{\phi_\ell}(E_{(12\ell)^2}/K) =1 $. However, we could get around this explicit calculation in Proposition \ref{prop3l=7,4,cubic} by using the $3$-parity result.

  If $3\mid n$, then $\p \in S_n$ (see \eqref{sn}) 
 and the image of the Kummer map  at $\p$ for $E_{-432n^2}$ is difficult to compute directly (see \cite[Remark 4.13]{jms}). However, notice that there are only finitely many elliptic curves $E_{-432n^2}$ over $K_\mathfrak p$, up to isomorphism, depending only on the cube-class of $n$. Hence it would be enough to compute the image of the Kummer map for any one of the (suitably chosen) member in each of the finitely many cube-classes. We thank the referee for explaining this alternative method to us.} 
\end{rem}
Now we can complete the proofs of Theorems \hyperlink{thm:A}{A} and \hyperlink{thm:D}{D}.
\begin{proof}[Proof of Theorem \hyperlink{thm:D}{D}]

The first part statement of Theorem \hyperlink{thm:D}{D}  follows from Lemma  \ref{lem2l=1,3rank} and  Proposition \ref{prop2l=1cubic}.

For the second part, recall by Lemma  \ref{lem2l=1,3rank},  for a prime $\ell \equiv 1 \pmod 9$, $h_3(2\ell)=2 \Leftrightarrow  \legendre[3]{2}{\ell} = 1 $. Further, by Proposition \ref{prop2l=1cubic}, we know that if  $\ell \equiv 1 \pmod 9 \text{ and } \legendre[3]{2}{\ell} \neq 1 $, then both $2\ell$ and $2\ell^2$ are non cube-sums. It is a classical result (see \cite[Page 55]{cox}) that \begin{small}$$ \{\ell \text{  prime} : \legendre[3]{2}{\ell} \neq 1\}=\{\ell \text{  prime} : \ell= 4x^2 -2xy +7y^2, \text{ for some } x, y \in \Z\}.$$\end{small}   
Thus it suffices to show that 
\begin{small}$$S:= \{\ell \text{  prime} : \ell \equiv 1\pmod 9 \text{ and } \ell= 4x^2 -2xy +7y^2, \text{ for some } x, y \in \Z\}$$\end{small}
has a positive Dirichlet density. 
Note that the binary quadratic form $4X^2 -2XY +7Y^2 \in \Z[X,Y]$ has discriminant $= -108$ and it represents the prime $19 \equiv 1 \pmod 9$ at $(X,Y)=(2,1)$ and $19\nmid 108$. Then it follows from \cite[Proposition 1, Part (1)]{hal} (which extends the work of  \cite{mey})  that the set $S$ above has a positive Dirichlet density. The completes the proof of  Theorem  \hyperlink{thm:D}{D}.
\end{proof}
\begin{proof}[Proof of Theorem \hyperlink{thm:A}{A}]
The first part of Theorem 
    \hyperlink{thm:A}{A} is immediate from  Lemma  \ref{lem3l=7,4,3rank} and Proposition \ref{prop3l=7,4,cubic}. For the density results in the second part, we only give a proof for $3\ell$ with $\ell \equiv 7 \pmod 9$ and the proof in the other case is similar. 

Observe that the Galois group of $x^3-3$ over $\Q$ is $S_3$ and applying the Chebotarev density theorem, we can get that  density of the set $\{\ell \text{ prime} : \legendre[3]{3}{\ell} = 1\}$ is $2/3.$ For a prime $q\equiv 2 \pmod 3$, every integer is a cube in $\F_q$, so   the density of the set $\{\ell \text{  prime} : \ell \equiv 1 \pmod 3 \text{ and } \legendre[3]{3}{\ell} = 1\}$ is $1/6$. Now it is well known that 
\begin{small}\begin{equation}\label{3cubemodl2}
    \begin{split}
    \{\ell \text{  prime} : \ell \equiv 1 \pmod 3 \text{ and } \legendre[3]{3}{\ell} = 1 \} & = \{\ell \text{  prime} : 4\ell= x^2 +243y^2, \text{ for some } x, y \in \Z\}\\ &=  \{\ell \text{  prime} : \ell= x^2 +xy +61y^2, \text{ for some } x, y \in \Z\}.
\end{split} 
\end{equation}\end{small}
 The binary quadratic form $X^2 +XY +61Y^2 \in \Z[X,Y]$  has discriminant $=-243$ and it represents the primes $61\equiv 7 \pmod 9$, $67\equiv 4 \pmod 9$ and $73 \equiv 1 \pmod 9$. Thus, we can again deduce using \cite[Proposition 1, Part (1)]{hal} that for each  $k\in\{1,4,7\}$,  the set
\begin{small}$$P_k:= \{\ell \text{  prime} : \ell \equiv k\pmod 9 \text{ and } \ell= x^2 +xy +61y^2, \text{ for some } x, y \in \Z\}$$\end{small} has  positive Dirichlet density. In particular, Dirchilet density of $P_7$ is positive but strictly less than $1/6$. Hence we can conclude from \eqref{3cubemodl2} that  $ \{\ell \text{  prime} : \ell \equiv 7\pmod 9 \text{ and } \legendre[3]{3}{\ell} \neq 1 \}$ has a positive Dirichlet density, as required. 
\end{proof}
\begin{proof}[Proof of Corollary \hyperlink{thm:B}{B}]
We apply Proposition \hyperlink{thm:C}{C} with $n=3\ell$ and $\ell\equiv 7 \pmod 9$ is a prime. Note that $\mathrm{cf}(4n)=12\ell \equiv 3 \pmod 9$. Then we deduce by the same proposition that the class number of  $\mathbb{Q}(\sqrt[3]{12\ell})$ is even. Now the assertion (i) of the corollary follows from Theorem \hyperlink{thm:A}{A}. The proof for the second case is similar (observe that $\Q(\sqrt[3]{12\ell^2})=\Q(\sqrt[3]{18\ell})$).
\end{proof}
\section{Numerical Examples:}\label{nexampl}
In this section, we discuss various numerical examples related to our results. We begin with  numerical example which shows that the criteria obtained in  Theorems \hyperlink{thm:A}{A}, \hyperlink{thm:D}{D} and Proposition \hyperlink{thm:C}{C} are  necessary but not sufficient.
\begin{example}\label{ex1}
Theorem \hyperlink{thm:A}{A}:  Let $ \ell = 547 \equiv 7 \pmod 9$. It can be verified that $\operatorname{Cl}_{\Q(\sqrt[3]{12\ell})} \cong \Z/3\Z \oplus \Z/3\Z$,  but $ 3\ell $ is a non-cube sum. For $ \ell = 67 $, we have $\operatorname{Cl}_{\Q(\sqrt[3]{18\ell})} \cong \Z/3\Z \oplus \Z/3\Z$,  although $ 3\ell^2 $ is a non-cube sum.
 
Theorem \hyperlink{thm:D}{D} : Let $ \ell = 919 \equiv 1 \pmod 9$. We have $\operatorname{Cl}_{\Q(\sqrt[3]{2\ell})} \cong \Z/3\Z \oplus \Z/9\Z$, i.e. $h_{3}(2\ell) = 2$,  even though $ 2\ell $ is a non-cube sum. For $ \ell = 109 $, we can check that, $\operatorname{Cl}_{\Q(\sqrt[3]{2\ell})} \cong \Z/3\Z \oplus \Z/6\Z$,  although $ 2\ell^2 $ is not a sum of two rational cubes.

Proposition \hyperlink{thm:C}{C}: Let $ \ell = 739 \equiv 1 \pmod 9$. We compute $\operatorname{Cl}_{\Q(\sqrt[3]{4\ell})} \cong \frac{\Z}{3\Z} \oplus \frac{\Z}{6\Z}$, i.e. $h_{2}(4\ell) = 1$, even though $ \ell $ is a non-cube sum. Similarly, for $ \ell = 199 $, we verify that $\operatorname{Cl}_{\Q(\sqrt[3]{2\ell})} \cong \Z/6\Z$,  although $ \ell^2 $ is a non-cube sum.
\end{example}
Next, we point out  that both the assumptions (i) and (ii) are needed in  Proposition \hyperlink{thm:C}{C}. 
\begin{example}\label{ex2}
   Consider $ n = 254 = 2 \cdot 127 $. Observe that by \eqref{rootnumber}, $w(n)= 1 $. We can check that $n$ is a cube sum. Notice that $\operatorname{cf}(4n) = 127 \equiv 1 \pmod{9}$, so hypothesis (i) fails and we have $\operatorname{Cl}_{\Q(\sqrt[3]{127})} \cong \Z/{3\Z}$. 
On the other hand, $ n = 13 $ is a cube sum. In this case,  $w(n)=-1$, so condition (ii) does not hold and we get \begin{small}$\operatorname{Cl}_{\Q(\sqrt[3]{52})} \cong \Z/{3\Z}$.\end{small} 
\end{example}

We  demonstrate our results in   Theorems \hyperlink{thm:A}{A}, \hyperlink{thm:D}{D} and Proposition \hyperlink{thm:C}{C} through numerical examples of cube sum and non-cube sum integers, computed via \cite{sage},  in Table \ref{tab:class_numbers}.

\newpage

\begin{table}[H]
    \centering
    \renewcommand{\arraystretch}{1} 
    \setlength{\tabcolsep}{10pt} 
    \caption{class numbers and ranks for different values of $\ell$}
    \label{tab:class_numbers}
    \begin{tabular}{|c|c|c||c|c|c|}
        \hline
        \multicolumn{3}{|c||}{Proposition \hyperlink{thm:C}{C}, $n=\ell$, $\ell \equiv 1\pmod{9}$} & \multicolumn{3}{c|}{Proposition \hyperlink{thm:C}{C}, $n=\ell^{2}$, $\ell \equiv 1\pmod{9}$} \\
        \hline
        $\ell$ & $r_{\operatorname{al}}(\ell)$ & $h(4\ell)$ & $\ell$  & $r_{\operatorname{al}}(\ell^2)$ & $h(2\ell)$ \\
        \hline
        19  &2  & 6      & 109  & 2 & 18      \\
        37  &2  & 6      & 181  & 2 & 12      \\
        127 &2  & 18     & 271  & 2 & 6       \\
        163 &2  & 12     & 739  & 2 & 36      \\
        271 &2  & 6      & 2503 & 2 & 12      \\
        379 &2  & 24     & 2521 & 2 & 12      \\
        397 &2  & 108    & 2953 & 2 & 18      \\
        \hline
        73 & 0 & 3       & 19   & 0 & 3       \\
        109& 0 & 3       & 37   & 0 & 3       \\
        \hline
    \end{tabular}
    
    \begin{tabular}{|c|c|c|c||c|c|c|c|}
        \hline
        \multicolumn{4}{|c||}{Theorem \hyperlink{thm:D}{D}, $n=2\ell$, $\ell \equiv 1\pmod{9}$} & \multicolumn{4}{|c|}{Theorem  \hyperlink{thm:D}{D}, $n=2\ell^{2}$, $\ell \equiv 1\pmod{9}$} \\
        \hline
        $\ell$ & $r_{\operatorname{al}}(2\ell)$  & $h(2\ell)$& $h_{3}(2 \ell)$ & $\ell$ & $r_{\operatorname{al}}(2\ell^2)$ & $h(2\ell)$ & $h_{3}(2 \ell)$ \\
        \hline
        109  & 2 & 18     & 2  & 307  & 2  & 54     & 2 \\
        127  & 2 & 27     & 2  & 433  & 2  & 27     & 2  \\
        307  & 2 & 54     & 2  & 2017 & 2  & 9      & 2  \\
        397  & 2 & 54     & 2  & 2341 & 2  & 108    & 2  \\
        433  & 2 & 27     & 2  & 3331 & 2  & 18     & 2  \\
        739  & 2 & 36     & 2  & 3457 & 2  & 27     & 2  \\
        \hline
        19   & 0 & 3      & 1  & 19   & 0 & 3       & 1   \\
        37   & 0 & 3      & 1  & 37   & 0 & 3       & 1   \\
        \hline
    \end{tabular}
    
    \begin{tabular}{|c|c|c|c||c|c|c|c|}
        \hline
        \multicolumn{4}{|c||}{Theorem \hyperlink{thm:A}{A},  $n= 3\ell$, $\ell \equiv 7\pmod{9}$} & \multicolumn{4}{|c|}{Theorem \hyperlink{thm:A}{A},  $n= 3\ell^{2}$, $\ell \equiv 4\pmod{9}$} \\
        \hline
        $\ell$ & $r_{\operatorname{al}}(3\ell)$ & $h(12\ell)$   & $h_3{(12\ell)}$ & $\ell$ & $r_{\operatorname{al}}(3\ell^2)$ & $h(18\ell)$  & $h_{3}(18 \ell)$ \\
        \hline
        61   &2  & 18    & 2    & 193  &2 & 18    & 2    \\
        151  &2  & 108   & 2    & 499  &2 & 108   & 2    \\
        367  &2  & 18    & 2    & 1759 &2 & 18    & 2    \\
        439  &2  & 72    & 2    & 2389 &2 & 360   & 2    \\
        619  &2  & 90    & 2    & 2713 &2 & 72    & 2    \\
        727  &2  & 54    & 2    & 3217 &2 & 54    & 2    \\
        \hline
        43   &0  & 12    & 1    & 13   &0  & 6    & 1    \\
        79   &0  & 3    & 1     & 229  &0  & 3    & 1    \\
        \hline
    \end{tabular}
\end{table}


\begin{thebibliography}{999}


\bibitem[ABS-BS]{abs} L. Alpöge, M. Bhargava, and A. Shnidman, Integers expressible as the sum of two rational cubes (with an appendix by A. Burungale and C. Skinner), 2022.  \url{https://arxiv.org/abs/2210.10730}.


\bibitem[BES]{bes} M. Bhargava, N. Elkies \& A. Shnidman, The average size of the $3$-isogeny Selmer groups of elliptic curves $y^2 = x^3 + k$, {\it J. London Math. Soc.} {\bf 101}(1) (2020), 299-327.

\bibitem[BS]{bs} B. Birch and N. Stephens, The parity of the rank of the Mordell–Weil group, \emph{Topology} \textbf{5} (1966), 295–299.


\bibitem[BK]{bk} A. Brumer \& K. Kramer, The rank of elliptic curves, {\it Duke Math J.} {\bf 44}(4) (1972), 715-743.


\bibitem[Ca1]{cass2} J. W. S. Cassels, Arithmetic on curves of genus 1. VIII. On conjectures of Birch and Swinnerton-Dyer, {\it J. Reine Angw. Math.} {\bf 217} (1965), 180-199.

\bibitem[Ca2]{cass} J. W. S. Cassels, Lectures on elliptic curves, {\it LMS Student Texts} {\bf 24}, cambridge Uni. Press (1991).

\bibitem[Cox]{cox} D. A. Cox, \emph{Primes of the form $x^2 + ny^2$}, 2nd ed., Pure and Applied Mathematics (Hoboken), John Wiley \& Sons, Inc., Hoboken, NJ, 2013.

\bibitem[DV]{dv} S. Dasgupta and J. Voight, Heegner points and Sylvester’s conjecture, in \emph{Arithmetic Geometry}, Clay Math. Proc. \textbf{8}, Amer. Math. Soc., Providence, RI, 2009, pp. 91–102.

\bibitem[DMM]{dmm} D. De, D. Majumdar, and S. Mondal, Relative $p$-class groups and $p$-Selmer groups, 2024. \url{https://arxiv.org/abs/2412.13022}.

\bibitem[Ger]{ger} F. Gerth III, On $3$-class groups of pure cubic fields, \emph{J. Reine Angew. Math.} \textbf{278/279} (1975), 52–62.

\bibitem[Hal]{hal} F. Halter-Koch, Representation of prime powers in arithmetical progressions by binary quadratic forms, \emph{J. Théor. Nombres Bordeaux} \textbf{15}(1) (2003), 141–149.



\bibitem[JMSh]{jms} S. Jha, D. Majumdar, and P. Shingavekar, 3-Selmer groups, ideal class groups and the cube sum problem, \emph{Journal of Number Theory} \textbf{277} (2025), 165–200.

\bibitem[JMSu]{jms2} S. Jha, D. Majumdar, and B. Sury, Binary cubic forms and rational cube sum problem,  \emph{Proc. Amer. Math. Soc.} \textbf{153}  (2025),  4657-4668.



\bibitem[Lem]{lem} F. Lemmermeyer, \emph{Reciprocity Laws: From Euler to Eisenstein}, Springer Monographs in Mathematics, Springer-Verlag, Berlin, 2000.

\bibitem[Mey]{mey} A. Meyer, Ueber einen Satz von Dirichlet, \emph{J. Reine Angew. Math.} \textbf{103} (1888), 98–117.


\bibitem[Mon]{mon} P. Monsky, Generalizing the Birch–Stephens theorem. I. Modular curves, \emph{Math. Z.} \textbf{221}(3) (1996), 415–420.

\bibitem[Nek]{nek} J. Nekovář, Selmer complexes, \emph{Astérisque} \textbf{310} (2006), viii+559.


\bibitem[RZ]{rz} F. Rodríguez Villegas and D. Zagier, Which primes are sums of two cubes?, in \emph{Number Theory (Halifax, NS, 1994)}, cMS conf. Proc. \textbf{15}, Amer. Math. Soc., Providence, RI, 1995, pp. 295–306.

\bibitem[Roh]{roh} D. E. Rohrlich, Galois theory, elliptic curves, and root numbers, \emph{compositio Math.} \textbf{100}(3) (1996), 311–349.


\bibitem[Sage]{sage} W. A. Stein et al., \emph{Sage Mathematics Software (Version 9.3)}, 2021. Available at \url{http://www.sagemath.org}.

\bibitem[Sat]{sat} P. Satgé, Un analogue du calcul de Heegner, \emph{Invent. Math.} \textbf{87}(2) (1987), 425–439.

\bibitem[SS]{ss} E. Schaefer and M. Stoll, How to do a $p$-descent on an elliptic curve, \emph{Trans. Amer. Math. Soc.} \textbf{356}(3) (2004), 1209–1231.

\bibitem[Sch]{sch} 
U. Schneiders, Estimating the $2$-rank of cubic fields by Selmer groups of elliptic curves, 
\emph{J. Number Theory} \textbf{62} (1997), no.~2, 375–396.

\bibitem[Sel]{sel} E. S. Selmer, The Diophantine equation \( ax^3 + by^3 + cz^3 = 0 \), \emph{Acta Math.} \textbf{85} (1951), 203–362.


\bibitem[Sil]{sil} J. H. Silverman, \emph{The Arithmetic of Elliptic curves}, Graduate Texts in Mathematics, Vol. 106, Springer-Verlag, New York, 1992.



\bibitem[Syl]{syl} J. J. Sylvester, On certain Ternary cubic-Form Equations, \emph{Amer. J. Math.} \textbf{2}(4) (1879), 357–393.

\bibitem[Yi]{yi} H. Yin, On the  case $8$ of the Sylvester conjecture, {\it Trans. of AMS} {\bf 375} (2022), 2705-2728.





\end{thebibliography}
\end{document}